\numberwithin{equation}{section}
\newtheorem{theorem}{Theorem}
\newtheorem{lemma}[theorem]{Lemma}
\newtheorem{corollary}[theorem]{Corollary}
\newtheorem{conjecture}[theorem]{\bf Conjecture}
\theoremstyle{remark}
\numberwithin{theorem}{section} \numberwithin{equation}{section}
\newcommand{\R}{\mathbb{R}}
\renewcommand{\H}{\mathbb{H}}
\newcommand{\C}{\mathbb{C}}
\newcommand{\Q}{\mathbb{Q}}
\newcommand{\Z}{\mathbb{Z}}
\newcommand{\N}{\mathbb{N}}
\newcommand{\SL}{{\text {\rm SL}}}
\DeclareMathOperator{\im}{Im}
\newcommand{\sgn}{\operatorname{sgn}}
\title{Sums of class numbers and mixed mock modular forms}
\author{Kathrin Bringmann}
\address{Mathematical Institute\\University of Cologne\\ Weyertal 86-90 \\ 50931 Cologne \\Germany}
\email{kbringma@math.uni-koeln.de}
\author{Ben Kane}
\address{Department of Mathematics, University of Hong Kong, Pokfulam, Hong Kong}
\email{bkane@maths.hku.hk}
\date{\today}
\thanks{The research of the first author was supported by the Alfried Krupp Prize for Young University Teachers of the Krupp Foundation.  This research was completed while the second author was a postdoc at the University of Cologne}
\subjclass[2010] {11E41, 11F37, 11F30}
\keywords{class numbers, Lerch sums, mixed mock modular forms, mock theta functions, modular forms}
\begin{document}
\begin{abstract}
In this paper, we consider sums of class numbers of the type\\
 $\sum_{m\equiv a\pmod{p}} H\left(4n-m^2\right)$, where $p$ is an odd prime, $n\in \N,$ and $a\in \Z$.  By showing that these are coefficients of mixed mock modular forms, we obtain explicit formulas.  Using these formulas for $p=5$ and $7$, we then prove a conjecture of Brown et al. in the case that $n=\ell$ is prime.
\end{abstract}

\maketitle

\section{Introduction and statements of results}

Let $H(n)$ denote the $n$th \begin{it}Hurwitz class number\end{it}, i.e., the number of equivalence classes of positive definite binary quadratic forms of discriminant $-n$ with the class containing $x^2+y^2$ weighted by $\frac{1}{2}$ and the class containing $x^2+xy+y^2$ weighted by $\frac{1}{3}$.  Moreover, by convention $H(0)=-\frac{1}{12}$.  Certain congruence classes appear as coefficients of weight $\frac{3}{2}$ Eisenstein series.  However, the generating function for all Hurwitz class numbers 
\begin{equation}\label{Hgen}
\mathcal{H}(q):=\sum_{n\geq 0} H(n)q^n
\end{equation}
is not itself modular, but rather mock modular \cite{HZ}.  Roughly speaking, this means that $\mathcal{H}$ may be naturally ``completed'' to a non-holomorphic modular form (a further description is given in Section \ref{sec:Hurwitz}).  Mock modular forms have since shown up in a variety of applications.  To name a few examples, Ramanujan's mock theta functions have been shown to be mock modular forms \cite{ZwegersThesis}, they have led to asymptotic and exact formulas in partition theory \cite{BringmannOnoInvent, BringmannOnoAnnals}, they are related to Lie superalgebras \cite{BringmannFolsom, BringmannOnoKac, Liesuper}, and they are connected to the quantum theory of black holes \cite{BM, DabMurZag}.

In this paper, we prove conjectures reminiscent of the famous identity (cf. p. 154 of \cite{Eichler})
\begin{equation}\label{eqn:1case}
\sum_{|m|<2\sqrt{\ell}} H\left(4\ell-m^2\right) = 2\ell,
\end{equation}
where $\ell$ is an odd prime.  More specifically, for a prime $p$, $a\in \Z$, and $n\in \N$, this paper is focused on sums of the type 
$$
H_{a,p}(n):=\sum_{\substack{|m|\leq 2\sqrt{n}\\ m\equiv a\pmod{p}}} H\left(4n-m^2\right).
$$

A number of identities for $H_{a,p}(n)$ were obtained in \cite{Hurwitz} for the special cases that $n=\ell$ is prime and $p=2$, $3$, $5$, or $7$.  To give an example indicative of the results in \cite{Hurwitz}, they proved in the special case that $p=5$ and $n=\ell$ is prime that
$$
H_{a,5}(\ell)= \begin{cases}
\frac{\ell-3}{2}&\text{if }a\equiv 0\pmod{5},\text{ and } \ell \equiv 4\pmod{5},\\
\frac{\ell-1}{2}&\text{if }a\equiv \pm 1\pmod{5},\text{ and } \ell \equiv 3\pmod{5},\\
\frac{\ell-1}{2}&\text{if }a\equiv \pm 2\pmod{5},\text{ and } \ell \equiv 2\pmod{5}.
\end{cases}
$$
In the cases $p=5$ and $p=7$, they were unable to completely classify $H_{a,p}(\ell)$, but conjecture a number of pleasant identities similar to \eqref{eqn:1case} based on computer data.
\begin{conjecture}\label{conj:5case}
For every $a,L\in \Z$, there exist constants $c_1,c_2\in \Q$ (given explicity in \eqref{eqn:5conjecture}) such that for every prime $\ell\equiv L\pmod{5}$, we have
$$
H_{a,5}(\ell) = c_{1} \ell + c_{2}.
$$
\end{conjecture}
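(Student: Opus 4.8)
The plan is to realize $H_{a,5}(n)$ as a weighted combination of Fourier coefficients of mixed mock modular forms of weight $2$ and then to read off the prime-indexed coefficients from the modular structure. First I would detect the congruence $m\equiv a\pmod 5$ by a roots-of-unity filter. Writing $\zeta:=e^{2\pi i/5}$ and setting
\[
C_j(n):=\sum_{|m|\le 2\sqrt n}\zeta^{jm}H\!\left(4n-m^2\right),
\]
the elementary identity $\tfrac15\sum_{j=0}^4\zeta^{j(m-a)}=\mathbf 1_{m\equiv a\,(5)}$ gives exactly $H_{a,5}(n)=\tfrac15\sum_{j=0}^4\zeta^{-aj}C_j(n)$. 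Each $C_j(n)$ is (after the standard rescaling matching the argument $4n-m^2$) a Fourier coefficient of the product $\mathcal H(\tau)\,\theta_j(\tau)$, where $\theta_j(\tau):=\sum_{m\in\Z}\zeta^{jm}q^{m^2}$ is a twisted unary theta function of weight $\tfrac12$. Since $\mathcal H$ is mock modular of weight $\tfrac32$ with a unary theta shadow, each $\mathcal H\,\theta_j$ is a mixed mock modular form of weight $2$ on a fixed congruence subgroup (of level divisible by $4$ and $25$). The term $j=0$ is the untwisted sum governed by the Eichler/Hurwitz--Kronecker relation and already contributes $2\ell$, linear in $\ell$ by \eqref{eqn:1case}; the content of Conjecture \ref{conj:5case} is therefore concentrated in the twisted terms $j\neq 0$.

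Second, I would pass to the completion $\widehat{\mathcal H}=\mathcal H+\mathcal H^-$, so that $\widehat{\mathcal H}\,\theta_j$ is a genuine (non-holomorphic) modular form of weight $2$, and apply holomorphic projection. This yields a holomorphic weight-$2$ modular form $g_j$ on some $\Gamma_0(N)$ with character, together with an explicit correction term $\Lambda_j(n)$ arising from the non-holomorphic Eichler-integral piece, so that $C_j(n)=\big[\text{coeff}\big]g_j+\Lambda_j(n)$. The defect $\Lambda_j(n)$ is a twisted analogue of the defect appearing in the classical class number relations and is a finite elementary sum over divisors of $n$; for $n=\ell$ prime it collapses to an expression that is linear in $\ell$ or constant on each residue class modulo $5$.

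Third, I would decompose $g_j=E_j+S_j$ into its Eisenstein and cuspidal parts. The Eisenstein coefficients are linear combinations of twisted divisor sums such as $\sigma_{1,\chi}(n)=\sum_{d\mid n}\chi(d)(n/d)$, and for $n=\ell$ prime these reduce to expressions of the form $\chi(\ell)\ell+1$; once the residue class $L=\ell\bmod 5$ is fixed, $\chi(\ell)$ is determined and every such contribution is linear in $\ell$. Combining the Eisenstein coefficients of the $g_j$ with the elementary defects $\Lambda_j(\ell)$ and the $j=0$ term, weighted by $\tfrac15\sum_j\zeta^{-aj}$, already produces a formula of the predicted shape $c_1\ell+c_2$, with $c_1,c_2\in\Q$ depending only on $a$ and $L$. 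What remains is to show that the cuspidal parts $S_j$, together with any genuinely mock (Lerch-type) contribution surviving the holomorphic projection, do not disturb this linearity at prime index.

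The main obstacle is precisely this last point. A priori the coefficients $a_{S_j}(\ell)$ of a weight-$2$ cusp form at a prime $\ell$ are Hecke eigenvalues of size $O(\sqrt\ell)$, in no way linear in $\ell$, so for the conjecture to hold these contributions must cancel after the weighting $\sum_j\zeta^{-aj}$, and likewise the Fourier coefficients of the residual Lerch-type mock pieces must recombine into divisor-sum or theta contributions. I expect this to follow from the specific arithmetic of the level attached to $p=5$: one identifies the relevant finite-dimensional cusp form and Lerch spaces explicitly, and uses the action of Hecke operators and of $\Gamma_0(5)$ on the twisted theta functions $\theta_j$ to show that the surviving forms contribute nothing to $H_{a,5}(\ell)$ for $\ell$ prime in a fixed residue class. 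Establishing this cancellation --- that $\sum_j\zeta^{-aj}\mathcal H\,\theta_j$ is, at prime Fourier coefficients, purely Eisenstein up to the elementary defect $\Lambda_j$ --- is the crux of the argument, and is where the restriction to $n=\ell$ prime (so that $\sigma_1(\ell)=\ell+1$ and Eisenstein coefficients become linear) and to the small level afforded by $p=5$ is indispensable.
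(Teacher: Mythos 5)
Your reduction of $H_{a,5}(n)$ to coefficients of $\mathcal{H}\cdot\theta_j$ and the passage to the completion $\widehat{\mathcal{H}}$ are sound and parallel the paper's setup (the paper sieves directly via $\left(\mathcal{H}\vartheta_{a,5}\right)|U(4)\otimes\chi_5^2$ as in \eqref{eqn:classsums}, which is equivalent to your roots-of-unity filter). Your proposed mechanism, however, diverges from the paper's: you want to apply holomorphic projection to $\widehat{\mathcal{H}}\theta_j$ and collect an elementary defect $\Lambda_j(n)$, whereas the paper never projects. Instead it constructs explicit divisor-sum generating functions $\mathcal{G}_{5,a+b}|S_{5,a^2-b^2}$, realizes them as derivatives of Appell--Lerch sums $A_2$, and completes them via Zwegers' $\widehat{A}_2$ (Lemma \ref{lem:Gcompletegen}) so that their non-holomorphic parts exactly cancel those of the completed class number part (Lemma \ref{lem:Htwist}); the sum is then a holomorphic weight $2$ form on $\Gamma_0(25)\cap\Gamma_1(5)$ (Theorem \ref{thm:gencomplete}). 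Your projection route is viable in principle (and would face the added wrinkle that weight $2$ holomorphic projection requires regularization and produces quasimodular $E_2$-contributions, which must be tracked), but it is genuinely a different argument.

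The real problem is your final step, which you yourself flag: you assert that the cuspidal parts $S_j$ ``must cancel'' at prime indices and that you ``expect'' this to follow from Hecke theory and the small level, but you give no argument --- and this is precisely the crux, not a routine verification. There is no general principle forcing the cancellation: for $p=7$ the cusp contribution does \emph{not} vanish (the CM newform $g_7$ of level $49$ genuinely appears in Corollary \ref{cor:7casea=0}, which is exactly why Conjecture \ref{conj:7case} is restricted to residue classes where the coefficients of $g_7$ vanish), so any argument must be specific to $p=5$ and cannot be soft. Note also that reasoning ``cusp coefficients at primes are $O(\sqrt{\ell})$, hence incompatible with linearity, hence they cancel'' is circular: linearity in $\ell$ is the statement being proven. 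The paper closes this gap with Lemma \ref{lem:modcomplete}: by the valence formula (Sturm bound), the holomorphic form of Theorem \ref{thm:gencomplete} is pinned down inside a fixed finite-dimensional space by its first $\frac{p}{6}\left(p^2-1\right)=20$ coefficients, and the explicit finite check yields Corollary \ref{cor:5case}, where the right-hand side turns out to be purely Eisenstein (sieved $\mathcal{D}$-combinations plus the elementary $\mathcal{G}_{5,r}$ terms, whose coefficients at a prime $\ell$ collapse since the only divisor pair is $(1,\ell)$ and the theta correction is supported on squares). To repair your proposal you would need the analogous step: compute the cuspidal projection explicitly, or verify your conjectural identity on a Sturm-bound number of coefficients in the relevant space --- without one of these, the proof is incomplete at its decisive point.
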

When restricted to certain congruence classes  for $a$ and $\ell\pmod{7}$, they conjecture a similar formula for $H_{a,7}(\ell)$.
\begin{conjecture}\label{conj:7case}
If $L=3,5,6$ and $a\in \Z$ or $(a,L)\equiv (\pm 1,1)\pmod{7}$, then there exist constants $c_1,c_2\in \Q$ (given explicitly in \eqref{eqn:7conjecture}) such that for every prime $\ell\equiv L\pmod{7}$, we have
$$
H_{a,7}(\ell) = c_{1} \ell + c_{2}.
$$
\end{conjecture}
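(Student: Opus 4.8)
The plan is to realize the numbers $H_{a,p}(\ell)$ as Fourier coefficients of an explicit mixed mock modular form of weight $2$ and then to extract a closed formula for those coefficients. First I would observe that, since $H(d)=0$ for $d<0$, the restricted sum $H_{a,p}(n)$ is exactly the $q^{4n}$-coefficient of the product
$$
\mathcal{H}(q)\,\theta_{a,p}(q), \qquad \theta_{a,p}(q):=\sum_{m\equiv a\!\!\pmod p} q^{m^2},
$$
because $\sum_{m\equiv a\,(p)} H(4n-m^2)=H_{a,p}(n)$. Here $\mathcal{H}$ is mock modular of weight $\tfrac32$ on $\Gamma_0(4)$ and $\theta_{a,p}$ is a congruence-restricted unary theta function of weight $\tfrac12$, so the product is a mixed mock modular form of weight $2$ on $\Gamma_0(4p^2)$ with the appropriate multiplier. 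This reduces the problem to understanding the coefficients of a single weight-$2$ object for each pair $(a,p)$.

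Next I would pass to the modular completion $\widehat{\mathcal{H}}=\mathcal{H}+\mathcal{H}^-$, where the non-holomorphic Eichler integral $\mathcal{H}^-$ has shadow a multiple of the theta function $\sum_{n} q^{n^2}$, so that $\widehat{\mathcal{H}}\,\theta_{a,p}$ transforms as a weight-$2$ (non-holomorphic) modular form. Applying the holomorphic projection operator $\pi_{\mathrm{hol}}$ then produces a holomorphic quasimodular form of weight $2$ on $\Gamma_0(4p^2)$. Crucially, $\pi_{\mathrm{hol}}$ splits as
$$
\pi_{\mathrm{hol}}\!\left(\widehat{\mathcal{H}}\,\theta_{a,p}\right)=\mathcal{H}\,\theta_{a,p}+\pi_{\mathrm{hol}}\!\left(\mathcal{H}^-\theta_{a,p}\right),
$$
so the genuine coefficients $H_{a,p}(n)$ differ from the coefficients of an honest (quasi)modular form by the explicitly computable correction $\pi_{\mathrm{hol}}(\mathcal{H}^-\theta_{a,p})$. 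The latter is a product of two theta-type series integrated against the Eichler kernel; evaluating it leads to an Appell–Lerch sum, which is the source of the ``Lerch sums'' in the title.

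I would then identify $\pi_{\mathrm{hol}}(\widehat{\mathcal{H}}\,\theta_{a,p})$ explicitly inside the finite-dimensional space $M_2(\Gamma_0(4p^2))$ together with the quasimodular generator $E_2$. Matching enough Fourier coefficients against a basis consisting of Eisenstein series (whose coefficients are divisor sums $\sigma_1$ along arithmetic progressions) and a finite list of newforms pins down the form uniquely. Combining this with the evaluation of the Lerch-sum correction yields, for every $n$, a closed formula for $H_{a,p}(n)$ as a linear combination of restricted divisor sums, an elementary Lerch/$\min$-type term, and Hecke eigenvalues $a_f(n)$ of the relevant weight-$2$ cusp forms.

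Finally I would specialize to $n=\ell$ prime. For prime $\ell$ the divisor sums $\sigma_1(\ell)=\ell+1$ (and their twisted analogues) are linear in $\ell$, and the elementary correction term collapses to a constant depending only on $\ell\bmod p$ and $a$, so the only possible obstruction to the asserted shape $c_1\ell+c_2$ is the cusp-form contribution $\sum_f \alpha_f\, a_f(\ell)$. The decisive step, and the main obstacle, is to show that for $p=5$ (all $a,L$) and for $p=7$ in precisely the classes $L\in\{3,5,6\}$ or $(a,L)\equiv(\pm1,1)\pmod 7$, these cusp-form coefficients either cancel in the relevant linear combination or are forced to contribute a constant for prime argument; for the remaining classes modulo $7$ the cusp-form part genuinely survives, which is exactly why the conjecture, and hence the theorem, is restricted to those congruence classes. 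Verifying this vanishing amounts to analyzing the decomposition of the weight-$2$ space into Eisenstein and cuspidal parts at level $4p^2$ and tracking which eigenforms occur with nonzero coefficient, after which the values $c_1,c_2$ can be read off from the Eisenstein part and recorded as in \eqref{eqn:7conjecture}.
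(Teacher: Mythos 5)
Your outline is correct in substance, but it takes a genuinely different route from the paper. The paper never invokes holomorphic projection: instead it writes the congruence-restricted divisor-sum generating functions $\mathcal{G}_{p,a\pm b}|S_{p,a^2-b^2}$ as $v$-derivatives of Zwegers' multivariable Appell function $A_2$, completes them using $\widehat{A}_2$ (Lemmas \ref{lem:diffR} and \ref{lem:Gcompletegen}), completes $\left(\mathcal{H}\vartheta_{a,p}\right)|U(4)\otimes\chi_p^2$ directly from the Hirzebruch--Zagier completion (Lemma \ref{lem:Htwist}), and checks that the two explicit non-holomorphic parts cancel, giving Theorem \ref{thm:gencomplete}; the identification inside the space of weight $2$ forms is then carried out by the valence formula (Lemma \ref{lem:modcomplete}), which matches your Sturm-bound step. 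Your projection argument, in the style of Mertens' proofs of class number relations, buys a shorter conceptual path --- there is no need to guess the Lerch-sum correction, since $\pi_{\mathrm{hol}}\left(\mathcal{H}^-\theta_{a,p}\right)$ produces it automatically --- at the cost of the weight-$2$ regularization and the $E_2$/quasimodular bookkeeping. The paper's method yields a cleaner intrinsic statement, valid for every odd prime $p$: the mixed mock modular form plus the explicit Lerch sums is an honest holomorphic modular form on $\Gamma_0\left(p^2\right)\cap\Gamma_1(p)$, with the level lowered from $4p^2$ via Li's lemmas on $U(2)$ and $U(4)$, which also keeps the Sturm bound small ($56$ coefficients at $p=7$ rather than a bound at level $4p^2$).

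One caveat on the step you rightly call decisive: the vanishing of the cuspidal contribution should be made concrete, and your phrase ``forced to contribute a constant for prime argument'' should read ``vanishes.'' For $a\equiv 0\pmod 7$ the relevant cusp form is the level $49$ CM newform $g_7$ attached to $y^2+xy=x^3-x^2-2x-1$, with CM by $\Q\left(\sqrt{-7}\right)$, so $a_{g_7}(\ell)=0$ exactly when $\ell\equiv 3,5,6\pmod 7$ (the inert primes); this is precisely why those classes $L$ appear in the conjecture. For $(a,L)\equiv(\pm 1,1)\pmod 7$ the mechanism is instead that the cuspidal component dies identically after the sieve $S_{7,1}$, as recorded in Corollary \ref{cor:7case}(1). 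Both facts are finite verifications once the Sturm bound is in place, so your plan does close, but without naming the CM structure (or the identical vanishing of the sieved cusp part) the restriction to exactly these congruence classes remains unexplained in your write-up.
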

We settle these conjectures in this paper.
\begin{theorem}\label{thm:conj}
Conjectures \ref{conj:5case} and \ref{conj:7case} are true.
\end{theorem}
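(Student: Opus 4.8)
The plan is to realize $H_{a,p}(n)$ as the Fourier coefficients of a mixed mock modular form and then read off an explicit closed formula. First I would encode the restricted sums in a generating function: writing $\Theta_{a,p}(\tau):=\sum_{m\equiv a\pmod p}q^{m^2}$ (with $q=e^{2\pi i\tau}$) for the theta series supported on the arithmetic progression and recalling $\mathcal{H}(\tau)=\sum_{N\geq 0}H(N)q^N$, the coefficient of $q^{4n}$ in the product $\mathcal{H}(\tau)\Theta_{a,p}(\tau)$ is exactly $\sum_{m\equiv a\pmod p}H(4n-m^2)=H_{a,p}(n)$, the constraint $|m|\leq 2\sqrt n$ being automatic since $H$ vanishes at negative arguments. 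Because $\mathcal{H}$ is mock modular of weight $\tfrac32$ and $\Theta_{a,p}$ is an ordinary theta series of weight $\tfrac12$, this product is a mixed mock modular form of weight $2$ on $\Gamma_0(4p^2)$ with a suitable character, and the sieve extracting the subprogression of exponents $\equiv 0\pmod 4$ keeps us within this framework.

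Next I would pass to the modular completion. Replacing $\mathcal{H}$ by its completion $\widehat{\mathcal{H}}$, the product $\widehat{\mathcal{H}}\,\Theta_{a,p}$ is a genuine non-holomorphic modular form of weight $2$. Applying holomorphic projection lands it in the finite-dimensional space of holomorphic (quasi)modular forms of weight $2$ and level $4p^2$, at the cost of an explicit correction coming from the non-holomorphic part; this correction is governed by the shadow of $\mathcal{H}$ (a unary theta function) paired against $\Theta_{a,p}$, and it contributes a Lerch-type sum that can be rewritten as an elementary divisor-type expression. The outcome is a decomposition of the generating function of $H_{a,p}(n)$ into an Eisenstein component, a cuspidal component, and this explicit shadow term, from which a formula for $H_{a,p}(n)$ follows.

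Finally I would specialize to $n=\ell$ prime with $\ell\equiv L\pmod p$ and $p\in\{5,7\}$. The Eisenstein coefficients at a prime are $\sigma_1$-type expressions, already of the desired shape $c_1\ell+c_2$, and the shadow correction likewise collapses to a linear expression once $n$ is prime. The genuinely delicate point, and the step I expect to be the main obstacle, is the cuspidal contribution: for a weight $2$ cusp form the $\ell$th coefficient is a Hecke eigenvalue, which is not linear in $\ell$, so the theorem can hold only if the projection onto the cusp space either vanishes or, within the prescribed residue classes, reduces to a quantity depending only on $L$. This is precisely where the hypotheses of Conjecture \ref{conj:7case} (the restriction to $L=3,5,6$ and to $(a,L)\equiv(\pm 1,1)$) must enter. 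I would therefore compute the cuspidal projection explicitly in the small spaces $S_2(\Gamma_0(100))$ and $S_2(\Gamma_0(196))$, using newform theory and Atkin--Lehner eigenvalues to show that the relevant combinations lie in the Eisenstein subspace (or have vanishing $\ell$th coefficient in the allowed classes). Matching the resulting linear expressions against \eqref{eqn:5conjecture} and \eqref{eqn:7conjecture} then pins down $c_1,c_2$ and completes the proof.
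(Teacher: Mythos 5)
Your proposal is correct in outline but takes a genuinely different route from the paper, and it is worth comparing the two. The paper does \emph{not} use holomorphic projection: instead it completes \emph{both} sides by hand. The divisor-sum generating functions $\mathcal{G}_{p,r}$ are realized as $v$-derivatives of Zwegers' multivariable Appell function $A_2$ (Lemma \ref{lem:Gcompletegen}), whose completion $\widehat{A}_2$ transforms by \eqref{eqn:A2modularity}--\eqref{eqn:A2elliptic}; the resulting non-holomorphic pieces are computed via Lemma \ref{lem:diffR} and shown to cancel \emph{exactly} against the Hirzebruch--Zagier completion of $\left(\mathcal{H}\vartheta_{a,p}\right)|U(4)\otimes\chi_p^2$ (Lemma \ref{lem:Htwist}), so the combination \eqref{eqn:gencomplete2} is a holomorphic weight $2$ form (Theorem \ref{thm:gencomplete}); explicit identities are then pinned down by a valence-formula/Sturm-bound check (Lemma \ref{lem:modcomplete}) rather than by computing a projection. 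Your route --- regularized holomorphic projection of $\widehat{\mathcal{H}}\,\vartheta_{a,p}$ --- is viable and indeed reproduces the same identities (this is essentially Mertens' later method for class number relations), but be aware that in weight $2$ the projection does not converge naively and must be regularized, producing quasimodular $E_2$-type contributions; this is consistent with the appearance of $\mathcal{D}$ in the paper, whose twists $\mathcal{D}\otimes\chi$ and sieves $\mathcal{D}|S_{p,r}$ are honestly modular. The computation you defer (``the correction is a Lerch-type sum that can be rewritten as an elementary divisor-type expression'') is precisely the technical heart of the matter, and it is the content that the paper's Appell-function completions supply. Your endgame is also correct in spirit and matches the paper's: for $p=7$ the cuspidal obstruction is the CM newform $g_7$ of level $49$ with CM by $\Q\left(\sqrt{-7}\right)$, whose coefficients vanish at primes $\ell\equiv 3,5,6\pmod 7$, explaining the restriction on $L$; and for $(a,L)\equiv(\pm 1,1)$ the sieved combination $\left(\mathcal{H}\vartheta_{1,7}\right)|U(4)|S_{7,1}$ simply has no cuspidal part (Corollary \ref{cor:7case}), which is your ``lies in the Eisenstein subspace'' alternative. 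One genuine imprecision: for $a\not\equiv 0\pmod p$ the relevant forms live on $\Gamma_0\left(p^2\right)\cap\Gamma_1(p)$ (after $U(4)$ lowers the level from $4p^2$), so the cuspidal analysis must include the nontrivial-nebentypus components mod $p$; confining yourself to $S_2\left(\Gamma_0(100)\right)$ and $S_2\left(\Gamma_0(196)\right)$ would miss exactly the character-twisted cusp forms that force the residue-class restrictions $r\equiv 1,3,6$; $3,5$; $5,6\pmod 7$ in parts (1)--(3) of Corollary \ref{cor:7case}. That is a fixable bookkeeping issue, not a flaw in the strategy.
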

Theorem \ref{thm:conj} is implied by a more general theorem, which we next describe.  The key ingredient is to use the mock modularity of the Hurwitz class number generating function, whereas the authors of \cite{Hurwitz} only took advantage of the modularity of certain congruence classes.  In particular, $H_{a,p}(n)$ are the coefficients of what are known as \begin{it}mixed mock modular forms\end{it}, which are products of mock modular forms with modular forms.  

We construct explicit mixed mock modular forms with these coefficients from the product of $\mathcal{H}(q)$ with unary theta functions $\vartheta_{a,N}(\tau)$ defined in \eqref{eqn:unth} (where $N\in \N$ and $q:=e^{2\pi i \tau}$ throughout).  To completely describe these, for a series $f(\tau)=\sum_{n\in \Z} a(n) q^n$, we require the $d$th $U$-operator $f|U(d)(\tau):=\sum_{n\in \Z} a(nd) q^n$ and the twist of a series $f$ by a character $\chi$, i.e., $f\otimes \chi(\tau):=\sum_{n\in \Z} a(n)\chi(n) q^n$.  It is straightforward to see that for every $a\in\Z$ and prime $p$, one has
\begin{equation}\label{eqn:classsums}
\sum_{\substack{n\geq 0\\ p\nmid n}} H_{a,p}(n) q^n= \left(\mathcal{H}(q)\vartheta_{a,p}\left(\tau\right)\right)\Big|U(4)\otimes \chi_{p}^2.
\end{equation}
Our main theorem expresses the right-hand side of \eqref{eqn:classsums} in terms of generating functions for explicit divisor sums.  For this, define
$$
\mathcal{G}_{N,r}(q):=\sum_{n\geq 1}\sum_{\substack{dd'=n\\ d\equiv \pm r\pmod{N}\\ d'> d}} dq^n  + \sum_{n\geq 1} \left(Nn-r\right)q^{\left(Nn-r\right)^2}.
$$
Moreover, we let $S_{N,r}$ denote the operator $f | S_{N,r}(\tau) = \sum_{n\in \Z} a(Nn+r) q^{Nn+r}$ which sieves coefficients congruent to $r$ modulo $N$.  
\begin{theorem}\label{thm:gencomplete}
For every $a\in \Z$ and odd prime $p$, we have that
\begin{equation}\label{eqn:gencomplete2}
\left(\mathcal{H}(q)\vartheta_{a,p}\left(\tau\right)\right)\Big|U(4)\otimes \chi_{p}^2  + \sum_{\substack{b\pmod{p}\\ b\not\equiv \pm a\pmod{p}}} \mathcal{G}_{p,a+b}(q)\Big|S_{p,a^2-b^2}
\end{equation}
is a weight $2$ holomorphic modular form on $\Gamma_0\left(p^2\right)\cap \Gamma_1(p)$.  Moreover, if $a=0$, then \eqref{eqn:gencomplete2} is a weight 2 modular form on $\Gamma_0\left(p^2\right)$.
\end{theorem}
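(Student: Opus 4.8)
The plan is to exhibit the expression \eqref{eqn:gencomplete2} as the holomorphic projection of a non-holomorphic weight $2$ modular form, with the divisor-sum terms $\mathcal{G}_{p,a+b}|S_{p,a^2-b^2}$ playing the role of the correction produced by the non-holomorphic part. First I would recall from Section~\ref{sec:Hurwitz} the completion of \eqref{Hgen}: by Zagier, $\mathcal{H}$ extends to a harmonic Maass form $\widehat{\mathcal{H}}=\mathcal{H}+\mathcal{H}^-$ of weight $\tfrac32$ on $\Gamma_0(4)$ whose shadow is a multiple of $\theta(\tau):=\sum_{n\in\Z}q^{n^2}$, where $\mathcal{H}^-$ is assembled from the incomplete gamma terms $n\,\Gamma\!\left(-\tfrac12,4\pi n^2 y\right)q^{-n^2}$ together with a single $y^{-1/2}$ term. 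Multiplying by the weight $\tfrac12$ unary theta function $\vartheta_{a,p}$ of \eqref{eqn:unth} gives a (non-holomorphic) weight $2$ modular form $\widehat{\mathcal{H}}\vartheta_{a,p}$, and since $U(4)$ and $\otimes\chi_p^2$ are Hecke-type operators they preserve this modularity. The first task is then purely bookkeeping: tracking the multiplier systems and levels through the product, through $U(4)$, and through the twist to confirm that $\big(\widehat{\mathcal{H}}\vartheta_{a,p}\big)|U(4)\otimes\chi_p^2$ transforms with weight $2$ on $\Gamma_0(p^2)\cap\Gamma_1(p)$ (the twist by the principal character $\chi_p^2$ removing the $p\mid n$ terms while keeping us at level $p^2$), and that for $a=0$ the nebentypus degenerates so that the group is all of $\Gamma_0(p^2)$.

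Next I would apply holomorphic projection $\pi_{\mathrm{hol}}$ to this weight $2$ object. The key point is that $\pi_{\mathrm{hol}}$ fixes the holomorphic part $\big(\mathcal{H}\vartheta_{a,p}\big)|U(4)\otimes\chi_p^2$ and replaces the non-holomorphic part by an explicit holomorphic series whose coefficients are given by the standard weight $2$ holomorphic projection lemma (as used by Gross--Zagier and by Mertens). Since the non-holomorphic part of $\widehat{\mathcal{H}}\vartheta_{a,p}$ is the product of the Eichler-type integral of $\theta$ with the holomorphic $\vartheta_{a,p}$, the projection can be evaluated term by term: the convolution of the incomplete gamma terms $q^{-n^2}$ against $\vartheta_{a,p}$ yields, after collecting exponents, exactly the divisor sums $\sum_{dd'=n} d$ with the residue restrictions recorded in $\mathcal{G}_{p,a+b}$, while the lone $y^{-1/2}$ term produces the partial theta contributions $\sum_{n\geq1}(p\,n-r)q^{(p\,n-r)^2}$. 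Matching the residue classes of $a$ and $b$ after $U(4)$ and the twist, one identifies this correction with $\sum_{b\not\equiv\pm a}\mathcal{G}_{p,a+b}|S_{p,a^2-b^2}$, so that $\pi_{\mathrm{hol}}\big(\widehat{\mathcal{H}}\vartheta_{a,p}|U(4)\otimes\chi_p^2\big)$ equals the full expression \eqref{eqn:gencomplete2}.

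It then remains to conclude that this expression is genuinely a holomorphic modular form. Holomorphy on the upper half plane is immediate once the non-holomorphic part has been cancelled, and holomorphy at the cusps follows because $\mathcal{H}$, $\vartheta_{a,p}$, $U(4)$, and the twist introduce no negative powers of $q$; combined with the weight $2$ transformation this makes \eqref{eqn:gencomplete2} a holomorphic modular form. I expect the main obstacle to lie precisely in the weight $2$ subtlety of holomorphic projection: in weight $2$ the operator $\pi_{\mathrm{hol}}$ a priori lands in the larger space of quasimodular forms, so one must carefully control the regularization (the product $\widehat{\mathcal{H}}\vartheta_{a,p}$ has only moderate growth, driven by the $y^{-1/2}$ term) and verify that the would-be $E_2$-type term carries vanishing coefficient, which is what forces the divisor sums and the partial theta pieces to combine into an honest modular form rather than merely a quasimodular one. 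The remaining work — the explicit term-by-term evaluation of the projection, the exact constants, and the level computation — is routine but delicate.
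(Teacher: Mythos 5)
Your route is genuinely different from the paper's, and it is viable: it is essentially the holomorphic projection method of Gross--Zagier, later carried out by Mertens precisely for sums of this shape. The paper goes the other way around: instead of projecting the completed class-number side, it \emph{completes the divisor-sum side}, realizing each $\mathcal{G}_{p,a\pm b}|S_{p,a^2-b^2}$ as a $v$-derivative of Zwegers' multivariable Appell function $A_2$ (Lemma \ref{lem:Gcompletegen}, via Lemma \ref{lem:diffR}), proving weight $2$ modularity of the completions $\widehat{G}_{\pm b}$ from \eqref{eqn:A2modularity} and \eqref{eqn:A2elliptic}, and then checking (Lemma \ref{lem:Htwist}) that the resulting non-holomorphic parts exactly cancel those of $(\widehat{\mathcal{H}}\vartheta_{a,p})|U(4)\otimes\chi_p^2$; holomorphy at the cusps then follows from polynomial growth of the coefficients. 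Your approach buys a conceptually cleaner one-sided argument and avoids Appell--Lerch machinery entirely; the paper's approach buys exact transformation laws with no regularization issues, and its part (3) argument (symmetrizing over $b\pmod{p}^\ast$ and using that $b\alpha$ permutes the classes) is what delivers the $\Gamma_0(p^2)$ statement for $a=0$, a point your sketch only gestures at via ``the nebentypus degenerates.''

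Two points in your plan need more than the ``routine but delicate'' label. First, the weight $2$ quasimodularity issue you flag at the end is the actual crux, and you leave it unresolved: regularized holomorphic projection in weight $2$ only lands in $M_2\oplus\C E_2$, and it is \emph{not} true that the $E_2$-coefficient simply vanishes here --- indeed the untwisted identity $(\mathcal{H}\vartheta_{0,1})|U(4)=2\mathcal{D}-\mathcal{G}_{1,0}-\frac{1}{12}$ quoted in Section \ref{sec:main} is genuinely quasimodular. What saves the theorem is the support restriction: since $b\not\equiv\pm a\pmod p$ forces $a^2-b^2\not\equiv 0\pmod p$, and the twist by $\chi_p^2$ kills coefficients with $p\mid n$, the entire expression \eqref{eqn:gencomplete2} is supported on exponents $n\not\equiv 0\pmod p$, and the sieved Eisenstein pieces $\mathcal{D}|S_{p,r}$ with $r\not\equiv 0\pmod p$ are honest modular forms on $\Gamma$ (as the paper notes before its $p=3$ computation). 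Your proof must make this sieving argument explicit; without it the conclusion ``holomorphic modular form'' does not follow from the projection formalism. Second, your cusp argument (``no negative powers of $q$'') only addresses the cusp at infinity; to control the other cusps you need either the built-in cuspidal behavior of the regularized projection or the paper's argument that a weakly holomorphic form with polynomially bounded coefficients is holomorphic at all cusps. With those two repairs, and the standard (Mertens-type) evaluation of the projection integrals --- including checking that the $y^{-1/2}$ term really produces the partial theta pieces $\sum_{n\geq 1}(pn-r)q^{(pn-r)^2}$ with the correct constants --- your outline becomes a complete alternative proof.
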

This paper is organized as follows.  In Section \ref{sec:Hurwitz}, we recall Hirzebruch and Zagier's completion of the class number generating function and introduce important series known as Appell-Lerch sums.  In Section \ref{sec:mixed}, we show how to complete the functions on either side of \eqref{eqn:gencomplete2} to obtain non-holomorphic modular forms.  We conclude the paper by proving Theorem \ref{thm:gencomplete} and Theorem \ref{thm:conj} in Section \ref{sec:main}.

\section{Hurwitz class numbers, Lerch sums, and known facts}\label{sec:Hurwitz}

As mentioned in the introduction, the generating function \eqref{Hgen} for the Hurwitz class numbers is a mock modular form.  More precisely, Hirzebruch and Zagier \cite{HZ} proved that one can complete $\mathcal{H}$ by adding a certain simple sum involving the incomplete Gamma function
$$
\Gamma\left(s;y\right):=\int_{y}^{\infty} t^{s-1}e^{-t}dt.
$$
The completed function is non-holomorphic, but belongs to a special class of functions known as harmonic weak Maass forms.  To define these, we require the \begin{it}weight $k$ hyperbolic Laplacian\end{it} (denoting $\tau=x+iy$ with $x,y\in \R$ throughout)
$$
\Delta_{k}:=-y^2\left(\frac{\partial^2}{\partial x^2} +\frac{\partial^2}{\partial y^2}\right) +iky\left(\frac{\partial}{\partial x} + i \frac{\partial}{\partial y}\right).
$$
Weight $k$ \begin{it}harmonic weak Maass forms\end{it} for $\Gamma\subset\SL_2(\Z)$ are real analytic functions $\mathcal{F}:\H\to\C$ satisfying the following properties:
\noindent

\noindent
\begin{enumerate}
\item $\mathcal{F}|_{k} \gamma\left(\tau\right) = \mathcal{F}\left(\tau\right)$ for every $\gamma\in \Gamma$,
\item $\Delta_{k}\left(\mathcal{F}\right)=0$,
\item $\mathcal{F}$ has at most linear exponential growth at each cusp of $\Gamma$.
\end{enumerate}
Here $|_k$ is the usual weight $k$ slash operator.  The class number generating function also belongs to a distinguished subspace, consisting of those forms whose $n$th coefficient (in the Fourier expansion in $x$) vanishes unless $(-1)^{k-\frac{1}{2}}n\equiv 0,1\pmod{4}$, known as \begin{it}Kohnen's plus space\end{it} \cite{Kohnen}.

We collect the modularity properties of $\mathcal{H}$ in the following theorem which can be easily concluded from Theorem 2 of \cite{HZ}.
\begin{theorem}\label{thm:Hcomplete}
The function 
\begin{equation}\label{Hshadow}
\widehat{\mathcal{H}}(\tau):=\mathcal{H}(q) + \frac{1}{4\sqrt{\pi}}\sum_{n>0}n\Gamma\left(-\frac12; 4\pi n^2 y\right)q^{-n^2}+\frac{1}{8\pi \sqrt{y}}
\end{equation}
is a  weight $\frac{3}{2}$ harmonic weak Maass form on $\Gamma_0(4)$ in Kohnen's plus space.
\end{theorem}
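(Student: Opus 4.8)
The plan is to deduce the statement from Theorem~2 of \cite{HZ} by recasting Hirzebruch and Zagier's non-holomorphic modular form in the language of harmonic weak Maass forms. First I would separate $\widehat{\mathcal{H}}$ into its holomorphic part $\mathcal{H}(q)$ and its non-holomorphic part
$$
\widehat{\mathcal{H}}^-(\tau) := \frac{1}{4\sqrt{\pi}}\sum_{n>0}n\,\Gamma\left(-\tfrac12; 4\pi n^2 y\right)q^{-n^2}+\frac{1}{8\pi \sqrt{y}}.
$$
The key structural observation is that $\widehat{\mathcal{H}}^-$ is a constant multiple of the non-holomorphic Eichler integral of the classical Jacobi theta function $\theta(\tau):=\sum_{n\in\Z}q^{n^2}$, which is a holomorphic modular form of weight $\frac12$ on $\Gamma_0(4)$ lying in the plus space. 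Concretely, applying $\xi_{3/2}:=2iy^{3/2}\,\overline{\partial/\partial\bar\tau}$ annihilates the holomorphic $\mathcal{H}(q)$ and sends $\widehat{\mathcal{H}}^-$ to an explicit nonzero multiple of $\theta$; this is a routine term-by-term computation using $\frac{\partial}{\partial y}\Gamma(-\tfrac12;4\pi n^2y)=-(4\pi n^2 y)^{-3/2}e^{-4\pi n^2 y}\cdot 4\pi n^2$ together with the matching contribution of the constant term $\frac{1}{8\pi\sqrt y}$.

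With this identification in hand, I would verify the three defining properties in turn. Property~(1), modularity of weight $\frac32$ on $\Gamma_0(4)$, is exactly the content of Theorem~2 of \cite{HZ}: by construction $\widehat{\mathcal{H}}^-$ is the non-holomorphic Eichler integral attached to the shadow $\theta$, and its modular anomaly is precisely what is needed to cancel the mock anomaly of $\mathcal{H}$, so that the sum transforms correctly. Property~(2), $\Delta_{3/2}(\widehat{\mathcal{H}})=0$, follows because holomorphic functions of weight $\frac32$ lie in the kernel of $\Delta_{3/2}$, while the non-holomorphic Eichler integral of a weight $\frac12$ form is harmonic of weight $\frac32$ by the standard factorization $\Delta_{3/2}=-\xi_{1/2}\circ\xi_{3/2}$: indeed $\xi_{3/2}(\widehat{\mathcal{H}})$ is a multiple of the holomorphic $\theta$, which is killed by $\xi_{1/2}$. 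Property~(3), at most linear exponential growth at the cusps, is immediate at $i\infty$ from the Fourier expansion, since $\mathcal{H}(q)$ has polynomially bounded coefficients supported on $n\geq 0$ and $\Gamma(-\tfrac12;4\pi n^2y)\ll e^{-4\pi n^2 y}(n^2 y)^{-3/2}$ forces $\widehat{\mathcal{H}}^-$ to decay; the behaviour at the remaining cusps of $\Gamma_0(4)$ is controlled by the explicit transformations recorded in \cite{HZ}.

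Finally, the plus-space condition requires that the $n$th Fourier coefficient vanish unless $(-1)^{3/2-1/2}n=-n\equiv 0,1\pmod 4$, i.e.\ $n\equiv 0,3\pmod 4$. For the holomorphic part this is the classical fact that $H(n)=0$ unless $-n$ is a discriminant, forcing $n\equiv 0,3\pmod 4$; for the non-holomorphic part the exponents $-n^2$ satisfy $-n^2\equiv 0,3\pmod 4$ because $n^2\equiv 0,1\pmod 4$, and the constant term sits in exponent $0$. Hence $\widehat{\mathcal{H}}$ lies in Kohnen's plus space \cite{Kohnen}.

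I expect the main obstacle to be purely a matter of bookkeeping rather than of genuine difficulty: one must match Hirzebruch and Zagier's original normalization of the non-holomorphic term against the incomplete-gamma normalization used in \eqref{Hshadow}, so that the factor $\frac{1}{4\sqrt\pi}$, the constant $\frac{1}{8\pi\sqrt y}$, and the constant multiple relating $\xi_{3/2}(\widehat{\mathcal{H}})$ to $\theta$ all fit together to yield an exact modular form rather than one correct only up to scaling. Once these constants are reconciled, the theorem is a direct translation of their Theorem~2 into the harmonic weak Maass form framework.
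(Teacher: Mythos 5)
Your proposal is correct and follows the same route as the paper, which gives no proof beyond noting that the statement ``can be easily concluded from Theorem 2 of \cite{HZ}''; your write-up simply supplies the routine details of that deduction (splitting off the non-holomorphic part, computing $\xi_{3/2}(\widehat{\mathcal{H}})=-\frac{1}{16\pi}\theta$, and checking harmonicity, growth, and the plus-space condition). All the stated computations check out, including the incomplete-gamma derivative and the congruence bookkeeping $-n\equiv 0,1\pmod 4$ for the plus space.
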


A number of other completions help us to prove Theorem \ref{thm:gencomplete}.  To this end, for $u\in \C\setminus \left(\Z\tau+\Z\right)$ we define the \begin{it}multivariable Appell function\end{it} \cite{Liesuper,ZwegersAppell}
\begin{equation}\label{eqn:A2def}
A_{\ell}\left(u,v;\tau\right):=e^{\pi i\ell u}\sum_{n\in \Z} \frac{(-1)^{\ell n}q^{\frac{\ell}{2} n\left(n+1\right)}e^{2\pi i n v}}{1-e^{2\pi i u} q^n}. 
\end{equation}
Directly from the definition, we have
\begin{align}
\nonumber
(-1)^{\ell}A_{\ell}(u+1,v;\tau) &= A_{\ell}(u,v+1;\tau) = A_{\ell}(u,v;\tau),\\
\label{eqn:Ashift}
A_{\ell}\left(u+\tau,v+\ell\tau;\tau\right)&=(-1)^{\ell}q^{-\frac{\ell}{2}}e^{2\pi i v}A_{\ell}(u,v;\tau).
\end{align}
In order to add a non-holomorphic function which ``completes'' $A_{\ell}$ to satisfy modularity, we define
\begin{align*}
\vartheta\left(z;\tau\right)&:=\sum_{n\in \frac{1}{2}+\Z}e^{\pi i n^2\tau+2\pi i n\left(z+\frac{1}{2}\right)},\\
R\left(u;\tau\right)&:=\sum_{n\in \frac{1}{2}+\Z} \left(\sgn\left(n\right) - E\left(\left(n+\frac{\im(u)}{y}\right)\sqrt{2y}\right)\right)\left(-1\right)^{n-\frac{1}{2}}e^{-\pi i n^2\tau-2\pi i n u },\\
E\left(z\right)&:=2\int_0^z e^{-\pi t^2}dt.
\end{align*}
We note that for $u\neq 0$, we also have the useful formula
\begin{equation}\label{eqn:Egamma}
E(u)=\sgn(u)\left(1-\frac{e^{-\pi u^2}}{\pi |u|}+\frac{1}{2\sqrt{\pi}}\Gamma\left(-\frac12; \pi u^2\right)\right).
\end{equation}

Theorem 2.2 of \cite{ZwegersAppell} yields the transformation properties of the completion of $A_{\ell}$:
\begin{multline}\label{eqn:A2complete}
\widehat{A}_{\ell}\left(u,v;\tau\right):=A_{\ell}(u, v; \tau)\\
+\frac{i}{2}\sum_{k=0}^{\ell-1} e^{2\pi iku}\vartheta\left(v+k\tau+\frac{\ell-1}{2}; \ell\tau\right)R\left(\ell u-v-k\tau-\frac{\ell-1}{2}; \ell\tau\right).
\end{multline}
\begin{theorem}
The function $\widehat{A}_{\ell}$ satisfies
\begin{equation}\label{eqn:A2modularity}
\widehat{A}_{\ell}\left(\frac{u}{c\tau+d},\frac{v}{c\tau+d};\frac{a\tau+b}{c\tau+d}\right)=\left(c\tau+d\right) e^{\frac{\pi i c}{c\tau+d}\left(-\ell u^2+2uv\right)}\widehat{A}_\ell\left(u,v; \tau\right).
\end{equation}
Moreover, for every $m_1,m_2,n_1,n_2\in \Z$, we have 
\begin{multline}\label{eqn:A2elliptic}
\widehat{A}_{\ell}\left(u+n_1\tau+m_1,v+n_2\tau+m_2;\tau\right)\\ 
= (-1)^{\ell\left(n_1+m_1\right)}e^{2\pi i u\left(\ell n_1 -n_2\right)}e^{-2\pi i n_1v}q^{\frac{\ell}{2}n_1^2-n_1n_2}\widehat{A}_{\ell}\left(u,v;\tau\right).
\end{multline}
\end{theorem}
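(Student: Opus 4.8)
The plan is to deduce both transformation laws from the behavior of the three building blocks $A_{\ell}$, $\vartheta$, and $R$, following Zwegers. Since these are precisely the content of Theorem~2.2 of \cite{ZwegersAppell}, one could simply invoke that reference; I instead sketch the genuine argument that underlies it.

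For the elliptic transformation \eqref{eqn:A2elliptic}, I would first record the behavior of the holomorphic piece $A_{\ell}$ under lattice shifts, which is read off directly from the definition \eqref{eqn:A2def}: the relations collected in \eqref{eqn:Ashift} handle $u\mapsto u+1$, $v\mapsto v+1$, and the simultaneous shift $(u,v)\mapsto(u+\tau,v+\ell\tau)$. Iterating and combining these produces the general shift by $n_1\tau+m_1$ in $u$ and $n_2\tau+m_2$ in $v$. For the completion term I would use the quasi-periodicity of the Jacobi theta function $\vartheta(z;\ell\tau)$ under $z\mapsto z+1$ and $z\mapsto z+\ell\tau$, together with the elementary behavior of $R(u;\ell\tau)$ under $u\mapsto u+1$ and $u\mapsto u+\ell\tau$, both of which follow from its definition via $\sgn$ and $E$. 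Matching the two sets of automorphy factors and simplifying yields \eqref{eqn:A2elliptic}; the only real work here is careful bookkeeping of signs and of the powers of $q$.

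For the modular transformation \eqref{eqn:A2modularity}, I would reduce to the generators $T=\sm{1}{1}{0}{1}$ and $S=\sm{0}{-1}{1}{0}$ of $\SL_2(\Z)$, first checking that the exponential automorphy factor on the right-hand side is a genuine cocycle, so that verifying $T$ and $S$ suffices. The $T$-transformation is immediate from the $q$-expansion and the simple behavior of $\vartheta$ and $R$ under $\tau\mapsto\tau+1$. The case $S$, i.e. $\tau\mapsto -1/\tau$ with $(u,v)\mapsto(u/\tau,v/\tau)$, is the crux: the holomorphic $A_{\ell}$ on its own fails to transform like a Jacobi form, and the completion $\frac{i}{2}\sum_{k}e^{2\pi iku}\vartheta R$ is engineered precisely so that its modular defect cancels that of $A_{\ell}$. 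Concretely, one applies the classical $S$-transformation of $\vartheta$ together with the modular transformation of $R$ established in Zwegers' thesis \cite{ZwegersThesis}, which controls the combination $R(u;\tau)+(-i\tau)^{-1/2}e^{-\pi i u^2/\tau}R(u/\tau;-1/\tau)$ in terms of a Mordell-type integral, and then shows that this integral contribution exactly matches the non-modular part of $A_{\ell}$.

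The main obstacle is the $S$-transformation, and within it the transformation law of the non-holomorphic function $R$: establishing that the Mordell-integral defect produced by $\vartheta\cdot R$ cancels the failure of $A_{\ell}$ to be modular is the substantive step, and it is exactly this cancellation that motivates the specific shape of the completion in \eqref{eqn:A2complete}. Everything else reduces to tracking automorphy factors and using the elementary identity \eqref{eqn:Egamma} relating $E$ to the incomplete Gamma function.
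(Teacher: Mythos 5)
Your proposal matches the paper, which offers no proof of this theorem at all: it simply quotes the result as Theorem 2.2 of Zwegers' preprint \cite{ZwegersAppell}, exactly the citation you note would suffice, and your sketch is a fair outline of the argument Zwegers actually gives (reduction to generators, with the Mordell-integral defect of $R$ cancelling the modular defect of $A_{\ell}$ under $S$). The one point your sketch understates is the elliptic law \eqref{eqn:A2elliptic}: the relations \eqref{eqn:Ashift} only generate the coupled shifts with $n_2=\ell n_1$, so for independent $n_1,n_2$ neither the holomorphic $A_{\ell}$ nor the $\vartheta\cdot R$ completion terms transform homogeneously on their own --- each acquires inhomogeneous theta-type corrections under a shift of $v$ alone by $\tau$, and the substantive bookkeeping is verifying that these corrections cancel between the two pieces, not merely matching automorphy factors.
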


\section{Mixed mock modular forms and non-holomorphic completions}\label{sec:mixed}
In this section, we show how to add a non-holomorphic function to ``complete'' each of the functions defined in the introduction to obtain a non-holomorphic modular form.  In order to write down these completions, for $N\in \N$ it is helpful to define the function 
$$
R_{a,N}(\tau):=\sum_{\substack{n\in \Z\\ n \equiv a\pmod{N}}} |n|\Gamma\left(-\frac{1}{2};4\pi n^2 y\right) q^{-n^2}.
$$
Note that $R_{a,N}$ only depends on $a\pmod{N}$ and 
$$
R_{-a,N}=R_{a,N}.
$$
We further define the \begin{it}unary theta functions\end{it}
\begin{equation}\label{eqn:unth}
\vartheta_{a,N}\left(\tau\right):=\sum_{\substack{m\in \Z\\ m\equiv a\pmod{N}}}q^{m^2}.
\end{equation}

\subsection{}
In this subsection, we complete $\left(\mathcal{H}(q)\vartheta_{2a,p}(\tau)\right)\big|U(4)\otimes\chi_p^2$.
\begin{lemma}\label{lem:Htwist}
For every $a,b\in \Z$ and odd prime $p$, the function 
\begin{multline}\label{eqn:Htwist}
\left(\mathcal{H}(q)\vartheta_{2a,p}(\tau)\right)\Big|U(4)\otimes\chi_p^2+\frac{1}{8\sqrt{\pi}}\sum_{\substack{b\pmod{p}\\ b\not\equiv \pm a \pmod{p}}}\sum_{k=0}^1 R_{2b+kp,2p}\left(\frac{\tau}{4}\right)\vartheta_{2a+kp,2p}\left(\frac{\tau}{4}\right)\\
 + \frac{\vartheta_{2a,2p}\left(\frac{\tau}{4}\right)}{4\pi\sqrt{y}}
\end{multline}
satisfies weight $2$ modularity for $\Gamma:=\Gamma_0\left(p^2\right)\cap \Gamma_1(p)$.
\end{lemma}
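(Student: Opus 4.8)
The plan is to realize the three-term expression in \eqref{eqn:Htwist} as the completion of $\left(\mathcal{H}(q)\vartheta_{2a,p}(\tau)\right)|U(4)\otimes\chi_p^2$, starting from the completed class number generating function of Theorem \ref{thm:Hcomplete}. Since $|n|q^{-n^2}$ is even in $n$, the nonholomorphic part of $\widehat{\mathcal{H}}$ rewrites as $\frac{1}{8\sqrt{\pi}}R_{0,1}(\tau)+\frac{1}{8\pi\sqrt{y}}$, so that $\widehat{\mathcal{H}}=\mathcal{H}+\frac{1}{8\sqrt{\pi}}R_{0,1}+\frac{1}{8\pi\sqrt{y}}$. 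First I would form the product $\widehat{\mathcal{H}}(\tau)\vartheta_{2a,p}(\tau)$ and show that the expression in \eqref{eqn:Htwist} is exactly $\bigl(\widehat{\mathcal{H}}(\tau)\vartheta_{2a,p}(\tau)\bigr)|U(4)\otimes\chi_p^2$; the asserted modularity then follows from the modularity of this completed product.

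For the modularity I would argue as follows. The unary theta function $\vartheta_{2a,p}$ is a weight $\frac12$ holomorphic modular form on a congruence subgroup of level $4p^2$ carrying a character determined by $2a$ and $p$, so by Theorem \ref{thm:Hcomplete} the product $\widehat{\mathcal{H}}\vartheta_{2a,p}$ is a weight $2$ real-analytic modular form (the half-integral multiplier systems multiply to an integral one). Applying $U(4)$ and then twisting by $\chi_p^2$, the principal character modulo $p$, preserves real-analytic modularity while adjusting the level; the points to verify are that $U(4)$ removes the factor of $4$ coming from $\Gamma_0(4)$ and that the twist by $\chi_p^2$ yields a form on $\Gamma=\Gamma_0(p^2)\cap\Gamma_1(p)$, with the sharper statement on $\Gamma_0(p^2)$ when $a=0$ reflecting that $\vartheta_{0,p}$ already carries a trivial character mod $p$.

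The heart of the computation is matching the nonholomorphic contributions. Writing $R_{0,1}(\tau)\vartheta_{2a,p}(\tau)=\sum_{n\neq0}\sum_{m\equiv2a\,(p)}|n|\Gamma(-\tfrac12;4\pi n^2y)q^{m^2-n^2}$, the operator $U(4)$ retains exactly the terms with $m\equiv n\pmod 2$ and sends each surviving summand to its value at $\tfrac{\tau}{4}$, while $\otimes\chi_p^2$ retains exactly the terms with $m\not\equiv\pm n\pmod p$. Re-indexing $n$ by its residue $2b\pmod p$ and recording the common parity $kp\pmod 2$ of $m$ and $n$ via $k\in\{0,1\}$ reassembles these terms precisely into $\frac{1}{8\sqrt{\pi}}\sum_{b\not\equiv\pm a\,(p)}\sum_{k=0}^1 R_{2b+kp,2p}(\tfrac{\tau}{4})\vartheta_{2a+kp,2p}(\tfrac{\tau}{4})$, using that $2$ is invertible modulo $p$ and that $b\not\equiv\pm a$ is equivalent to $n\not\equiv\pm m\pmod p$. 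An entirely analogous but shorter computation sends $\frac{1}{8\pi\sqrt{y}}\vartheta_{2a,p}|U(4)\otimes\chi_p^2$ to $\frac{\vartheta_{2a,2p}(\tau/4)}{4\pi\sqrt{y}}$, and the holomorphic part $\mathcal{H}\vartheta_{2a,p}|U(4)\otimes\chi_p^2$ is the first term of \eqref{eqn:Htwist} by definition.

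The main obstacle will be the precise level bookkeeping: pinning down the multiplier system of $\vartheta_{2a,p}$ and verifying that multiplying by this half-integral weight theta function, applying $U(4)$, and twisting by $\chi_p^2$ really land in $\Gamma_0(p^2)\cap\Gamma_1(p)$ rather than in a larger-level or more heavily charactered space. The arithmetic matching of the nonholomorphic terms is, by contrast, routine once the effect of $U(4)$ (the parity condition together with the $\tau\mapsto\tau/4$ rescaling) and that of $\chi_p^2$ (the $m\not\equiv\pm n$ restriction) are made explicit.
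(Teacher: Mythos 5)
Your architecture is the same as the paper's: complete $\mathcal{H}$ to $\widehat{\mathcal{H}}$ via Theorem \ref{thm:Hcomplete}, form $\bigl(\widehat{\mathcal{H}}\vartheta_{2a,p}\bigr)\big|U(4)\otimes\chi_p^2$, and check that the non-holomorphic pieces picked up along the way are exactly the last two terms of \eqref{eqn:Htwist}. That matching computation you describe is correct and coincides with the paper's (the paper organizes the twist as $\sum_{1\leq r\leq p-1} S_{p,r}$ and solves $a^2-b^2\equiv r\pmod p$, which is equivalent to your reindexing $n\equiv 2b\pmod{p}$, $m\not\equiv\pm n\pmod p$, with the parity split $k\in\{0,1\}$ and the rescaling $\tau\mapsto\tau/4$). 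One caveat even here: your claim that $\frac{1}{8\pi\sqrt{y}}\vartheta_{2a,p}\big|U(4)\otimes\chi_p^2$ equals $\frac{\vartheta_{2a,2p}(\tau/4)}{4\pi\sqrt{y}}$ holds only for $a\not\equiv 0\pmod p$; when $p\mid a$ every surviving exponent is divisible by $p$ and the twist annihilates this term (in the paper's proof it arises solely from the class $b\equiv 0\pmod p$, which forces $a\not\equiv 0\pmod p$).

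The genuine gap is the modularity assertion itself, which you explicitly defer (``the main obstacle will be the precise level bookkeeping'') rather than prove --- and the naive mechanisms you invoke would fail. By Proposition 2.1 of \cite{Shimura}, $\widehat{\mathcal{H}}\vartheta_{2a,p}$ lives on $\Gamma_0\left(4p^2\right)\cap\Gamma_1(p)$, and $U(4)$ does not simply ``remove the factor of $4$'': by Lemma 1 of \cite{Li}, one application of $U(2)$ brings the level down to $2p^2$, but the second application needs Lemma 4 of \cite{Li}, which requires knowing that the coefficients of $\bigl(\widehat{\mathcal{H}}\vartheta_{2a,p}\bigr)\big|U(2)$ are supported on even exponents. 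That support condition is where Kohnen's plus space enters: the exponents of $\widehat{\mathcal{H}}$ lie in $\{0,3\}\pmod 4$ and those of the theta factor are squares, hence lie in $\{0,1\}\pmod 4$, so the even exponents of the product are automatically divisible by $4$. Without this observation the descent stops at level $2p^2$. Likewise, twisting by $\chi_p^2$ is not an innocuous ``level adjustment'': a direct twist by a character modulo $p$ generically multiplies the level by $p^2$, which would land you at level $p^4$, not $p^2$. The paper avoids this by splitting $\Gamma_1(p)$-forms into Nebentypus components and writing $\chi_p^2=1-U(p)V(p)$, so that by \cite{Li} the level goes down under $U(p)$ and back up under $V(p)$, ending exactly on $\Gamma=\Gamma_0\left(p^2\right)\cap\Gamma_1(p)$. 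These two devices --- the plus-space/even-support argument enabling the second $U(2)$, and the $1-U(p)V(p)$ realization of the principal-character twist --- constitute the actual content of the lemma's modularity claim, and your proposal identifies their location but supplies neither.
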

\begin{proof}
By Proposition 2.1 of \cite{Shimura}, $\vartheta_{2a,p}$ is modular of weight $\frac{1}{2}$ on $\Gamma_0\left(4p^2\right)\cap \Gamma_1(p)$.

We first write 
\begin{equation}\label{eqn:HThrewrite}
\left(\mathcal{H}(q)\vartheta_{2a,p}(\tau)\right)\big|U(4)\otimes\chi_p^2= \sum_{1\leq r\leq p-1} \left(\mathcal{H}(q)\vartheta_{2a,p}(\tau)\right)\big|U(4)\big|S_{p,r}.
\end{equation}
By Lemma 1 of \cite{Li}, $(\widehat{\mathcal{H}}(q)\vartheta_{2a,p}(\tau))|U(2)$ fulfills weight 2 modularity on $\Gamma_0(2p^2)\cap \Gamma_1(2p)$.  Since the $n$th coefficient of $(\widehat{\mathcal{H}}(q)\vartheta_{2a,p}(\tau))|U(2)$ is zero unless $n$ is even, Lemma 4 of \cite{Li} implies that $(\widehat{\mathcal{H}}(q)\vartheta_{2a,p}(\tau))|U(4)$ satisfies weight 2 modularity on $\Gamma_0(p^2)\cap \Gamma_1(p)$.  Moreover, since modular forms on $\Gamma_1(p)$ split into modular forms on $\Gamma_0(p)$ with Nebentypus, rewriting $\chi_p^2 = 1-U(p)V(p)$ (where as usual $f|V(d)(\tau):=f(d\tau)$), Lemma 1 of \cite{Li} implies that the level goes down with $U(p)$ and back up with $V(p)$, so that overall the group becomes $\Gamma$.

By Theorem \ref{thm:Hcomplete}, to complete each summand $\left(\mathcal{H}(q)\vartheta_{2a,p}(\tau)\right)\big|U(4)\big|S_{p,r}$ on the right-hand side of \eqref{eqn:HThrewrite}, one must add
\begin{equation}\label{eqn:Sievecomplete}
\left(\frac{1}{4\sqrt{\pi}}\sum\limits_{\substack{n>0\\ m\equiv 2a\pmod{p}}} n\Gamma \left(-\frac12; 4\pi n^2 y\right) q^{m^2-n^2}+\frac{1}{8\pi\sqrt{y}}\sum_{m\equiv 2a\pmod{p}}q^{m^2}\right)\Bigg|U(4)\Bigg|S_{p,r}.
\end{equation}
Due to $m\equiv 2a\pmod{p}$ and the congruences implied by $U(4)$ and $S_{p,r}$, the congruence conditions on $n$ and $m$ are equivalent to $m\equiv n\pmod{2}$, $m\equiv 2a\pmod{p}$, and $n\equiv \pm 2b \pmod{p}$, where $b$ satisfies $a^2-b^2\equiv r\pmod{p}$.  If no such $b$ exists, then \eqref{eqn:Sievecomplete} equals zero.  We may thus  assume that such a $b$ exists.  Whenever $b\not\equiv 0\pmod{p}$, the fact that $p$ is odd implies that \eqref{eqn:Sievecomplete} equals
$$
\frac{1}{8\sqrt{\pi}}\left(\sum_{k=0}^1\sum_{\pm} R_{\pm 2b+kp,2p}\left(\frac{\tau}{4}\right)\vartheta_{2a+kp,2p}\left(\frac{\tau}{4}\right)\right).
$$

Moreover, in the case that $b\equiv 0\pmod{p}$, \eqref{eqn:Sievecomplete} equals
$$
\frac{1}{8\sqrt{\pi}}\left(\sum_{k=0}^1 R_{kp,2p}\left(\frac{\tau}{4}\right)\vartheta_{2a+kp,2p}\left(\frac{\tau}{4}\right)\right)+\frac{\vartheta_{2a,2p}\left(\frac{\tau}{4}\right)}{4\pi\sqrt{y}}.
$$
To finish the proof, one then sums over all choices of $b$ to obtain \eqref{eqn:Htwist}.
\end{proof}

\subsection{}  In this section, we complete $\sum_{\pm}\mathcal{G}_{p,a\pm b}(q)|S_{p,a^2-b^2}$.

We begin with a lemma, which gives a more useful form ($\ell\in \Z$) for
$$
\mathcal{R}_{\ell,p}(v;\tau):=\left[\frac{d}{dv} \left(e^{\frac{\pi i\ell v}{p}}q^{-\frac{\ell^2}{4}}R\left(p\ell\tau-v-\frac12; 2p^2\tau\right)\right)\right]_{v=0}.
$$
\begin{lemma}\label{lem:diffR}
If $p$ is an odd prime and $\ell\in\Z$ satisfies $-p< \ell\leq p$, then 
$$
\mathcal{R}_{\ell,p}(v;\tau)=\sqrt{\pi}\sum_{n\in\frac12+\Z}\left\lvert n+\frac{\ell}{2p}\right\rvert \Gamma\left(-\frac12; 4\pi p^2\left(n+\frac{\ell}{2p}\right)^2 y\right) q^{-p^2\left(n+\frac{\ell}{2p}\right)^2}+\frac{\delta_{\ell=p}}{p\sqrt{y}}.
$$
\end{lemma}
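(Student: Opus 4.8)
The plan is to compute the $v$-derivative directly from the definition of $R(u;\tau)$ and then evaluate at $v=0$, recognizing the incomplete Gamma function via the formula \eqref{eqn:Egamma}. First I would insert the series defining $R$ into the expression inside the bracket, writing
$$
e^{\frac{\pi i \ell v}{p}} q^{-\frac{\ell^2}{4}} R\left(p\ell\tau-v-\tfrac12;2p^2\tau\right)
=\sum_{n\in \frac12+\Z}\left(\sgn(n)-E\!\left(\left(n+\tfrac{\im(u)}{Y}\right)\sqrt{2Y}\right)\right)(-1)^{n-\frac12} e^{-\pi i n^2(2p^2\tau)} e^{-2\pi i n u} e^{\frac{\pi i \ell v}{p}} q^{-\frac{\ell^2}{4}},
$$
where $u=p\ell\tau-v-\tfrac12$ and $Y=\im(2p^2\tau)=2p^2 y$. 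Here the key observation is that $\im(u)=p\ell y$ is \emph{independent of $v$} (since $v$ is a real auxiliary variable differentiated at $0$), so the error-function argument does not depend on $v$; thus $\frac{d}{dv}$ acts only on the two explicit exponentials $e^{-2\pi i n u}$ and $e^{\frac{\pi i \ell v}{p}}$, together with the overall $v$-dependence entering through $u$.

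Next I would carry out the differentiation. The exponential prefactor contributes a derivative $\frac{\pi i \ell}{p}$, while $e^{-2\pi i n u}$ contributes $2\pi i n$ (from $\frac{\partial u}{\partial v}=-1$), so after setting $v=0$ the combined factor from each term is $2\pi i\left(n+\frac{\ell}{2p}\right)$. Completing the square in the exponent $-2\pi i n^2 p^2\tau - 2\pi i n(p\ell\tau-\tfrac12)-\frac{\ell^2}{4}\cdot 2\pi i\tau$ (keeping track of the $q^{-\ell^2/4}$ factor, which supplies exactly the term needed to complete the square) should yield the clean exponent $-2\pi i p^2\left(n+\frac{\ell}{2p}\right)^2\tau$, i.e.\ $q^{-p^2(n+\frac{\ell}{2p})^2}$, and the sign factor $(-1)^{n-\frac12}$ should combine with the residual phases to become $+1$. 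I would then substitute $E$ using \eqref{eqn:Egamma}: the term $\sgn(n)-\sgn(\cdot)(1-\tfrac{e^{-\pi(\cdot)^2}}{\pi|\cdot|}+\tfrac{1}{2\sqrt\pi}\Gamma(-\tfrac12;\pi(\cdot)^2))$ must be handled carefully, since the polynomial-in-$v$ prefactor multiplies it before differentiation. The $\Gamma$-piece produces the main sum, with argument $\pi\cdot 2Y\left(n+\frac{\ell}{2p}\right)^2=4\pi p^2 y\left(n+\frac{\ell}{2p}\right)^2$ and the $\sqrt\pi$ arising from the normalization $\tfrac{1}{2\sqrt\pi}$ combined with a factor of $2$; the absolute value $\left|n+\frac{\ell}{2p}\right|$ comes from $\sgn(\cdot)$ times $\left(n+\frac{\ell}{2p}\right)$.

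The main obstacle, and the place requiring genuine care rather than routine bookkeeping, is the boundary term yielding $\frac{\delta_{\ell=p}}{p\sqrt y}$. This arises from the non-$\Gamma$ contributions in \eqref{eqn:Egamma}, namely the $\sgn(n)$ and the $-\sgn(\cdot)\left(1-\tfrac{e^{-\pi(\cdot)^2}}{\pi|\cdot|}\right)$ pieces, after the $v$-derivative brings down the linear factor $2\pi i\left(n+\frac{\ell}{2p}\right)$. For generic $\ell$ with $-p<\ell\le p$ the shifted index $n+\frac{\ell}{2p}$ never vanishes as $n$ ranges over $\frac12+\Z$, so these algebraic pieces telescope or cancel in pairs $n\leftrightarrow -n-\frac{\ell}{p}$; but precisely when $\ell=p$ the argument $n+\frac12$ can equal $0$ at $n=-\frac12$, and it is the resulting singular/degenerate term that, after combining the $\frac{1}{\pi|\cdot|}$ contribution with the derivative factor, produces the extra constant $\frac{1}{p\sqrt y}$. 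Verifying that all other non-exponential terms cancel for the allowed range of $\ell$, and that exactly the stated residual survives at $\ell=p$, is the delicate step; I would isolate it by treating the $\Gamma$-contribution and the elementary contribution separately and tracking the $v$-derivative of each before evaluating at $v=0$.
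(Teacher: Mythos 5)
Your overall strategy (insert the series defining $R$, differentiate, convert via \eqref{eqn:Egamma}) is the paper's strategy, but the step you single out as the ``key observation'' is precisely where the argument breaks: you take $v$ to be real, conclude that the argument of $E$ is independent of $v$, and let $\frac{d}{dv}$ act only on the exponentials. Under that reading the lemma is false. Indeed, each nondegenerate summand then contributes, besides the desired $\Gamma$-piece, a Gaussian piece coming from the $\frac{e^{-\pi u^2}}{\pi|u|}$ term of \eqref{eqn:Egamma}: with $c_n:=n+\frac{\ell}{2p}$ and $u_n:=2pc_n\sqrt{y}$, the factor $2\pi i\,c_n$ against $\sgn(n)\frac{e^{-\pi u_n^2}}{\pi|u_n|}$ leaves $-\frac{1}{p\sqrt{y}}e^{-\pi u_n^2}q^{-p^2c_n^2}=-\frac{1}{p\sqrt{y}}e^{-2\pi i p^2 c_n^2\bar{\tau}}$, and these terms cannot cancel among themselves: your proposed pairing $n\leftrightarrow -n-\frac{\ell}{p}$ stays inside $\frac12+\Z$ only when $p\mid \ell$, and in any case it fixes $c_n^2$, so paired terms are \emph{equal}, not opposite (at $x=0$ the sum of these terms is strictly negative). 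In the paper's computation $v$ is a complex variable --- this is why the displayed formula in the proof retains $\frac{\im(v)}{2p^2y}$ inside the argument of $E$ --- and differentiating the $E$-factor through its $\im(v)$-dependence, using $\partial_v\im(v)=-\frac{i}{2}$ and $E'(z)=2e^{-\pi z^2}$, produces $+\frac{1}{p\sqrt{y}}e^{-\pi u_n^2}q^{-p^2c_n^2}$, which is exactly what cancels the leftover Gaussians. Omitting this contribution is not a bookkeeping slip but the loss of half of the mechanism of the lemma.

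The same omission invalidates your account of the boundary term $\frac{\delta_{\ell=p}}{p\sqrt{y}}$. At the degenerate index ($\ell=p$, $n=-\frac12$) the exponential $e^{2\pi i v c_n}$ is identically $1$, so the linear factor $2\pi i\,c_n$ you invoke vanishes there, and \eqref{eqn:Egamma} is not even applicable at $u=0$, so there is no $\frac{1}{\pi|u|}$ singularity to combine with it; under your real-$v$ reading this summand is constant in $v$ and contributes $0$, so the $\delta_{\ell=p}$ term would never appear. Its actual source is again the $\im(v)$-dependence: the degenerate summand is $i\left(-1-E\left(-\frac{\im(v)}{p\sqrt{y}}\right)\right)$, and differentiating at $v=0$ gives $i\cdot(-2)\cdot\left(-\frac{1}{p\sqrt{y}}\right)\cdot\left(-\frac{i}{2}\right)=\frac{1}{p\sqrt{y}}$, which is the claimed constant. (This term is also needed downstream: it is what matches the $\frac{1}{8\pi\sqrt{y}}$-type contributions in Lemma \ref{lem:Htwist}.) To repair your proof, replace the ``$\im(u)$ is independent of $v$'' step by the complex-derivative computation just described; the rest of your outline (completing the square to get $q^{-p^2c_n^2}$, the sign bookkeeping, and $\sgn(n)=\sgn(c_n)$ off the degenerate term) is sound and agrees with the paper.
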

\begin{proof}
We have
\begin{multline*}
e^{\frac{\pi i\ell v}{p}}q^{-\frac{\ell^2}{4}}R\left(p\ell\tau-v-\frac12; 2p^2\tau\right)\\
=i\sum_{n\in\frac12+\Z}\left(\sgn(n)-E\left(2p\left(n+\frac{\ell}{2p}-\frac{\text{Im}(v)}{2p^2y}\right)\sqrt{y}\right)\right)
q^{-p^2\left(n+\frac{\ell}{2p}\right)^2}e^{2\pi iv\left(n+\frac{\ell}{2p}\right)}.
\end{multline*}
We now note that whenever $n+\frac{\ell}{2p}\neq 0$, we have 
\[
\sgn(n)=\sgn\left(n+\frac{\ell}{2p}\right).
\]
Differentiating and plugging in \eqref{eqn:Egamma} yields the claim, after a straightforward calculation.
\end{proof}

In order to complete the right-hand side of \eqref{eqn:gencomplete2}, we pair the terms $b$ and $-b$ and determine the associated completion.
\begin{lemma}\label{lem:Gcompletegen}
Suppose that $a,b\in \Z$ and $p$ is an odd prime with $a\not\equiv \pm b\pmod{p}$. 

\noindent
\begin{enumerate}
\item
If $b\not\equiv 0\pmod{p}$, then the function 
$$
\widehat{\mathcal{G}}_{p,a,b}(q):=\sum_{\pm}\mathcal{G}_{p,a\pm b}(q)\Big|S_{p,a^2-b^2}-\frac{1}{4\sqrt{\pi}}\sum_{k=0}^1 R_{kp+2b,2p}\left(\frac{\tau}{4}\right)\vartheta_{kp+2a,2p}\left(\frac{\tau}{4}\right)
$$
satisfies weight $2$ modularity for $\Gamma$.
\item
If $b\equiv 0\pmod{p}$, then the function 
$$
\widehat{\mathcal{G}}_{p,a,0}(q):=\mathcal{G}_{p,a}(q)\Big|S_{p,a^2}-\frac{1}{8\sqrt{\pi}}\sum_{k=0}^1 R_{kp,2p}\left(\frac{\tau}{4}\right)\vartheta_{kp+2a,2p}\left(\frac{\tau}{4}\right)  - \frac{\vartheta_{2a,2p}\left(\frac{\tau}{4}\right)}{4\pi\sqrt{y}}
$$
satisfies weight $2$ modularity for $\Gamma$.
\item
In the case that $a\equiv 0\pmod{p}$, the function
$$
\sum_{0<b<\frac{p}{2}}\widehat{\mathcal{G}}_{p,0,b}\Big|S_{p,-b^2}
$$
furthermore satisfies weight 2 modularity for $\Gamma_0\left(p^2\right)$.
\end{enumerate}
\end{lemma}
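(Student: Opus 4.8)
The plan is to recognize the holomorphic series $\sum_{\pm}\mathcal{G}_{p,a\pm b}(q)\big|S_{p,a^2-b^2}$ as the holomorphic part of a $v$-derivative of the level-one Appell function $A_1$ specialized along the $\tau$-direction, and then to inherit weight-$2$ modularity from the completion $\widehat{A}_1$ whose transformation laws are \eqref{eqn:A2modularity} and \eqref{eqn:A2elliptic}. Concretely, I would first rewrite the divisor sum in $\mathcal{G}_{N,r}$ as a Lambert-type series: summing the geometric factor $\sum_{d'>d}q^{dd'}=q^{d(d+1)}/(1-q^d)$ shows that the first term of $\mathcal{G}_{N,r}$ equals $\sum_{d\equiv\pm r\,(N)}d\,q^{d(d+1)}/(1-q^d)$, and the congruence $d\equiv\pm r\pmod{p}$ together with the unary theta correction $\sum(Nn-r)q^{(Nn-r)^2}$ is exactly what is produced by $[\tfrac{\partial}{\partial v}A_1(u,v;2p^2\tau)]_{v=0}$ after the substitution $u=p\ell\tau-\tfrac12$, with $\ell\pmod{2p}$ encoding $r\equiv a\pm b$; this is the same specialization whose $R$-factor appears in $\mathcal{R}_{\ell,p}$. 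The sieve $S_{p,a^2-b^2}$ selects $n=dd'\equiv(a+b)(a-b)\pmod{p}$, which pairs the residues $d\equiv\pm(a+b)$ with $d'\equiv\pm(a-b)$, and this is precisely why $\mathcal{G}_{p,a+b}$ and $\mathcal{G}_{p,a-b}$ are grouped together into $\widehat{\mathcal{G}}_{p,a,b}$.

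Next I would differentiate the completion \eqref{eqn:A2complete} for $\ell=1$, namely $\widehat{A}_1=A_1+\tfrac{i}{2}\vartheta(v;\tau)R(u-v;\tau)$, in $v$ at $v=0$. This cleanly separates the result into its holomorphic part, which by the previous step is $\sum_{\pm}\mathcal{G}_{p,a\pm b}\big|S_{p,a^2-b^2}$, and the non-holomorphic part coming from the $\vartheta R$ factor. Here Lemma \ref{lem:diffR} does the essential bookkeeping: it evaluates the $v$-derivative of the $R$-factor (at modulus $2p^2\tau$) as the incomplete-Gamma series $\mathcal{R}_{\ell,p}(0;\tau)$, and the rescalings $|n+\tfrac{\ell}{2p}|=|j|/(2p)$ and $4\pi p^2(n+\tfrac{\ell}{2p})^2y=\pi j^2 y$ re-express it exactly as the functions $R_{kp+2b,2p}(\tau/4)$ appearing in $\widehat{\mathcal{G}}_{p,a,b}$. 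The dichotomy between cases (1) and (2) is governed by the term $\delta_{\ell=p}/(p\sqrt{y})$ in Lemma \ref{lem:diffR}: when $b\not\equiv0\pmod{p}$ the specialization is generic, this term is absent, and the prefactor is $\tfrac14$, whereas when $b\equiv0\pmod{p}$ the $\delta$-term contributes the extra summand $\vartheta_{2a,2p}(\tau/4)/(4\pi\sqrt{y})$ and halves the prefactor to $\tfrac18$.

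With both parts matched, weight-$2$ modularity of $\widehat{\mathcal{G}}_{p,a,b}$ follows from \eqref{eqn:A2modularity}: the automorphy factor of $\widehat{A}_1$ carries weight $1$, and the $v$-derivative raises it to weight $2$. The step I expect to be the main obstacle is the cross term generated when $\tfrac{\partial}{\partial v}$ hits the exponential factor $e^{\frac{\pi i c}{c\tau+d}(-u^2+2uv)}$ in \eqref{eqn:A2modularity}; this contributes a multiple of $cu\,\widehat{A}_1(u,0;\tau)$, which must be shown to vanish on $\Gamma=\Gamma_0(p^2)\cap\Gamma_1(p)$. This is exactly where the level is pinned down: for $\gamma\in\Gamma$ one has $p^2\mid c$, and combined with the denominators of the torsion specialization $u=p\ell\tau-\tfrac12$ and the elliptic transformation \eqref{eqn:A2elliptic} the cross term drops out, after which matching the multiplier against the known transformations of $\vartheta_{a,N}$ and $R_{a,N}$ confirms invariance under $\Gamma$; this mirrors the argument already used for $\widehat{\mathcal{H}}$ in Lemma \ref{lem:Htwist}.

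Finally, for part (3) I would use that when $a\equiv0\pmod{p}$ the residues $-b^2$ for $0<b<\tfrac{p}{2}$ run exactly once through the negatives of the nonzero quadratic residues modulo $p$. An element of $\Gamma_0(p^2)\setminus\Gamma$ acts through a diamond operator that scales the torsion parameter by a unit modulo $p$, hence permutes the sieve classes $S_{p,-b^2}$ and the corresponding summands $\widehat{\mathcal{G}}_{p,0,b}$ among themselves; since we sum over all such $b$, the Nebentypus is averaged away and the modularity improves from $\Gamma$ to $\Gamma_0(p^2)$, exactly paralleling the $a=0$ statement of Theorem \ref{thm:gencomplete}.
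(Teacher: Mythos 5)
Your overall architecture --- rewrite the sieved divisor sums as a Lambert series, realize them as a $v$-derivative of a specialized Appell function, complete via \eqref{eqn:A2complete}, do the incomplete-Gamma bookkeeping with Lemma \ref{lem:diffR}, and re-index the summands for part (3) --- is exactly the paper's, but your central identification fails. The paper uses the \emph{level two} Appell function specialized at $p^2\tau$, namely
$$
\mathcal{G}_{p,a\pm b}(q)\big|S_{p,a^2-b^2}=\frac{p}{2\pi i}\left[\frac{d}{dv}\left(A_2\left((b\pm a)p\tau,\,v\pm 2ap\tau;\,p^2\tau\right)e^{\frac{2\pi i(b\pm a)v}{p}}q^{a^2-b^2}\right)\right]_{v=0},
$$
and this choice is forced. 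The sieved sum equals $\sum_{n\in\Z}(pn+a+b)\,q^{(pn+a+b)(pn+p+a-b)}/\left(1-q^{p(pn+a+b)}\right)$, whose denominator exponents $p(pn+a+b)$ step by $p^2$ as $n\mapsto n+1$, while the quadratic exponent grows like $q^{p^2n^2}$. In $A_{\ell}(u,v;\tau')$ the denominator steps by one power of $q'=e^{2\pi i\tau'}$ and the numerator grows like $(q')^{\frac{\ell}{2}n^2}$, so matching both constraints forces $\tau'=p^2\tau$ \emph{and} $\ell=2$. Your $A_1(u,v;2p^2\tau)$ gets the quadratic growth right, but its denominators step by $q^{2p^2}$, so a single $A_1$ sees only every other term of the progression $d\equiv a+b\pmod p$ --- this is precisely why the completion of $A_2$ carries the two terms $k=0,1$, which become the two classes modulo $2p$ (the factors $R_{kp+2b,2p}$ and $\vartheta_{kp+2a,2p}$) in the statement. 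Worse, your specialization $u=p\ell\tau-\frac12$ gives $e^{2\pi iu}=-q^{p\ell}$, so the denominators become $1+q^{p(\cdot)}$ and the geometric expansion weights the divisor pairs with alternating signs, incompatible with $\mathcal{G}_{N,r}$: the $-\frac12$ in $\mathcal{R}_{\ell,p}$ does not come from the elliptic variable $u$ but from the characteristic $\frac{\ell-1}{2}=\frac12$ in the $k$-sum of \eqref{eqn:A2complete} for $\ell=2$.

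Beyond this, your account of the non-holomorphic part skips a cancellation the paper needs: when $\frac{d}{dv}$ hits the product $\vartheta\cdot R$ in the completion, the derivative-of-$\vartheta$ contributions do not vanish term by term; the paper kills them only after summing over $\pm b$, via the change of variables $m\to -m-k$ inside the theta series (the same substitution that yields $2\vartheta_{(1-k)p+2a,2p}(\tau/4)$). Without that pairing argument the completion would involve theta derivatives and would not reduce to the claimed $R_{\cdot,2p}\,\vartheta_{\cdot,2p}$ combination --- which is also the honest reason, beyond the sieve pairing you cite, that $\mathcal{G}_{p,a+b}$ and $\mathcal{G}_{p,a-b}$ must be grouped. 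Your treatment of the automorphy cross term and of part (3) is in the right spirit: the paper absorbs the former into the prefactor $e^{\frac{2\pi i(b\pm a)v}{p}}q^{a^2-b^2}$ together with the choices $m_1=(a+b)\beta p$, $m_2=a\beta p$, $n_1=(b+a)\frac{\alpha-1}{p}$, $n_2=2a\frac{\alpha-1}{p}$ in \eqref{eqn:A2elliptic} (with $\Gamma_1(p)$ guaranteeing $\frac{\alpha-1}{p}\in\Z$ and $p^2\mid\gamma$ handling the exponential multiplier), and part (3) is exactly the re-indexing $b\mapsto b\alpha$ modulo $p$. But as written your proof cannot be repaired without replacing $A_1(\cdot,\cdot;2p^2\tau)$ by $A_2(\cdot,\cdot;p^2\tau)$, or equivalently by a pair of $A_1$'s covering both residue classes modulo $2p$.
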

\begin{proof}
We first assume that $b\not\equiv 0\pmod{p}$ and may assume without loss of generality that $a+b<p$, since the definition of $\mathcal{G}_{p,a+b}$ only depends on $a$ and $b$ modulo $p$.  Recalling the definition \eqref{eqn:A2def} of $A_2$, we obtain
\begin{align*}
\mathcal{G}_{p, a+b}(q)\Big| S_{p,a^2-b^2}=&\sum_{n\equiv a^2-b^2 \pmod{p}}\sum\limits_{{dd'=n\atop{d\equiv \pm (a+b)\pmod{p}}}\atop{d'> d}}d q^n\\
=&\sum\limits_{n\geq 0\atop{m\geq 0}}\left(pn+(a+b)\right)q^{\left(pn+(a+b)\right)\left(pn+p+(a-b)+pm\right)}\\
&+\sum_{\substack{n\geq 1\\ m\geq 0}}\left(pn-(a+b)\right)q^{\left(pn-(a+b)\right)\left(pn+(b-a)+pm\right)}\\
=&\sum_{n\in\Z}\frac{\left(pn+(a+b)\right)q^{\left(pn+(a+b)\right)\left(pn+p+(a-b)\right)}}{1-q^{p\left(pn+(a+b)\right)}}\\
=&\frac{p}{2\pi i}\left[\frac{d}{dv}\left(A_2\left(\left(a+b\right)p\tau, v+2ap\tau; p^2\tau\right) e^{\frac{2\pi i\left(a+b\right)v}{p}}q^{a^2-b^2}\right)\right]_{v=0}.
\end{align*}
The analogous calculation for $-b$ follows by \eqref{eqn:Ashift}.  A similar calculation yields a uniform equation for $b\equiv 0\pmod{p}$ and $a\not\equiv 0\pmod{p}$ as well.  Overall, we obtain
$$
\mathcal{G}_{p, a\pm b}(q)\Big| S_{p,a^2-b^2}=\frac{p}{2\pi i}\left[\frac{d}{dv}\left(A_2\left(\left(b\pm a\right)p\tau, v\pm 2ap\tau; p^2\tau\right) e^{\frac{2\pi i\left(b\pm a\right)v}{p}}q^{a^2-b^2}\right)\right]_{v=0}.
$$

We next prove the modularity of 
\begin{equation}\label{eqn:hatG}
\widehat{G}_b(\tau):=\frac{p}{2\pi i}\left[\frac{d}{dv}\left(\widehat{A}_2\left(\left(a+b\right)p\tau, v+2ap\tau; p^2\tau\right) e^{\frac{2\pi i\left(a+b\right)v}{p}}q^{a^2-b^2}\right)\right]_{v=0}.
\end{equation}
For $\left(\begin{smallmatrix} \alpha &\beta\\ \gamma&\delta\end{smallmatrix}\right)\in \Gamma$, we use equation \eqref{eqn:A2modularity} followed by \eqref{eqn:A2elliptic} with $m_1=(a+b)\beta p$, $m_2=a\beta p$, $n_1=\left(b+a\right)\left(\frac{\alpha-1}{p}\right)\in \Z$, and $n_2=2a\left(\frac{\alpha-1}{p}\right)\in\Z$ to yield
$$
\widehat{G}_b\left(\frac{\alpha\tau+\beta}{\gamma\tau+\delta}\right) = \left(\gamma\tau+\delta\right)^2\widehat{G}_b(\tau).
$$
Hence $\widehat{G}_b$ satisfies weight 2 modularity.

It remains to compute the non-holomorphic part of $\sum_{\pm}\widehat{G}_{\pm b}$.  By \eqref{eqn:A2complete} and the definition \eqref{eqn:hatG} of $\widehat{G}_b$, we have  
\begin{multline}\label{eqn:diffpm}
\sum_{\pm} \widehat{G}_{\pm b}(\tau) = \sum_{\pm} \mathcal{G}_{p, a\pm b}(q)\Big|S_{p,a^2-b^2} + \frac{p}{4\pi}\Bigg[\frac{d}{dv}\sum_{\pm}\sum_{k=0}^{1} \vartheta\left(v\pm 2ap\tau+kp^2\tau +\frac{1}{2};2p^2\tau\right)\\
\times R\left(2bp\tau -v-kp^2\tau -\frac{1}{2};2p^2\tau\right)e^{2\pi i\left(a^2-b^2+ k\left(b\pm a \right)p\right)\tau} e^{\frac{2\pi i (b\pm a)v}{p}} \Bigg]_{v=0}.
\end{multline}
Here we double count $b\equiv 0\pmod{p}$ to get a uniform formula.  We now rewrite
\begin{multline*}
q^{\frac{1}{4}\left(kp\pm 2a\right)^2}e^{\frac{2\pi i v}{p}\left(\frac{kp}{2} \pm 2a\right)} \vartheta\left(v\pm 2ap\tau+kp^2\tau +\frac{1}{2};2p^2\tau\right)\\
=-\sum_{m\in \frac{1}{2}+\Z} q^{\left(mp+\frac{kp}{2}\pm a\right)^2} e^{\frac{2\pi iv}{p}\left(mp+\frac{kp}{2}\pm a\right)}.
\end{multline*}
However, we have that
\begin{multline*}
\sum_{\pm}\left[\frac{d}{dv}\sum_{m\in \frac{1}{2}+\Z} q^{\left(mp+\frac{kp}{2}\pm a\right)^2} e^{\frac{2\pi iv}{p}\left(mp+\frac{kp}{2}\pm a\right)}\right]_{v=0}\\
=\frac{2\pi i}{p}\sum_{\pm}\sum_{m\in \frac{1}{2}+\Z}\left(mp+\frac{kp}{2}\pm a\right) q^{\left(mp+\frac{kp}{2}\pm a \right)^2}=0,
\end{multline*}
which can be seen by making the change of variables $m\to -m-k$.  Moreover, by taking $m\to -m-k$, we also see that
$$
\sum_{\pm}\sum_{m\in \frac{1}{2}+\Z} q^{\left(mp+\frac{kp}{2}\pm a \right)^2}=2\sum_{m\in \frac{1}{2}+\Z} q^{\frac{1}{4}\left(2mp+kp+ 2a \right)^2}=2\vartheta_{(1-k)p+2a,2p}\left(\frac{\tau}{4}\right).
$$
It follows that \eqref{eqn:diffpm} equals
$$
-\frac{p}{2\pi}\sum_{k=0}^1 \mathcal{R}_{2b-kp,p}\left(v;\tau\right)\vartheta_{(1-k)p+2a,2p}\left(\frac{\tau}{4}\right).
$$
By Lemma \ref{lem:diffR}, we may rewrite this as 
\begin{multline*}
-\delta_{b=p}\frac{\vartheta_{2a,2p}\left(\frac{\tau}{4}\right)}{2\pi\sqrt{y}}-\frac{1}{4\sqrt{\pi}}\sum_{k=0}^1 \vartheta_{(1-k)p+2a,2p}\left(\frac{\tau}{4}\right)\\
\times \sum_{n\in \frac{1}{2}+\Z} \left|2np-kp+2b\right|\Gamma\left(-\frac{1}{2};\pi\left(2np-kp+2b\right)^2y\right)q^{-\frac{1}{4}\left(2np-kp+2b\right)^2}\\
 =-\frac{1}{4\sqrt{\pi}}\sum_{k=0}^1 R_{(1-k)p+2b,2p}\left(\frac{\tau}{4}\right)\vartheta_{(1-k)p+2a,2p}\left(\frac{\tau}{4}\right)-\delta_{b=p}\frac{\vartheta_{2a,2p}\left(\frac{\tau}{4}\right)}{2\pi\sqrt{y}}.
\end{multline*}
Statements (1) and (2) now follow from Lemma \ref{lem:diffR} with $\ell=2b-kp$.

We next prove part (3).  The claim is equivalent to showing that 
$$
g(\tau):=\frac{4\pi i}{p} \sum_{0<b<\frac{p}{2}}\widehat{\mathcal{G}}_{p,0,b}\Big|S_{p,-b^2}=\left[\frac{d}{dv}\sum_{b\pmod{p}^\ast}e^{\frac{2\pi ib v}{p}}q^{-b^2}\widehat{A}_2\left(bp\tau, v; p^2\tau\right)\right]_{v=0}
$$
satisfies weight 2 modularity for $\Gamma_0\left(p^2\right)$.  Here the sum runs over those $b\pmod{p}$ with $(b,p)=1$.  Note that the sum only depends on $b\pmod{p}$ because, for $b'\equiv b\pmod{p}$, we may use \eqref{eqn:A2elliptic} with $n_1=\frac{b'-b}{p}$.  However, a simple calculation yields
\begin{multline*}
g\left(\frac{\alpha\tau+\beta}{\gamma\tau+\delta}\right)=(\gamma\tau+\delta)^2\left[\frac{d}{dv}\sum_{b\pmod{p}^\ast}q^{-\alpha^2b^2}e^{\frac{2\pi i b\alpha v}{p}}\widehat{A}_2\left(b\alpha p\tau, v; p^2\tau\right)\right]_{v=0}\\
=\left(\gamma\tau+\delta\right)^2 g(\tau),
\end{multline*}
where we used that $b\alpha$ runs $\pmod{p}^\ast$ if $b$ does.

\end{proof}

%Every coefficient of our new overall function may be written as a linear combination of class numbers and divisor sums and hence grows polynomially. Thus we have a holomorphic form of weight $2$

\section{Proof of Theorem \ref{thm:gencomplete} and Theorem \ref{thm:conj}}\label{sec:main}

In this section, we prove our main theorem and then give explicit identities for 
$$
\sum_{
\substack{n\geq 0\\ p\nmid n}} H_{a,p}(n)q^n
$$
for certain fixed choices of $a$ and $p$.  In particular, since the divisor sums occurring in Theorem \ref{thm:gencomplete} are particularly simple for primes, we obtain the desired conjectures leading to Theorem \ref{thm:conj}.  We begin with the proof of our main theorem.

\begin{proof}[Proof of Theorem \ref{thm:gencomplete}]
By fixing $a$ and summing over all congruence classes for $b\not\equiv \pm a \pmod{p}$ in Lemma \ref{lem:Gcompletegen},
we see that the non-holomorphic parts cancel the non-holomorphic part from Lemma \ref{lem:Htwist} and hence the sum is a (weakly) holomorphic modular form.  Every coefficient of our new overall function may be written as a linear combination of class numbers and divisor sums and hence grows polynomially. Thus we have a holomorphic modular form of weight $2$, yielding Theorem \ref{thm:gencomplete}.
\end{proof}
To compute explicit identities, we use the following lemma, which follows from the valence formula.
\begin{lemma}\label{lem:modcomplete}
If $p$ is an odd prime, $a\in \Z$, and $f$ is a holomorphic modular form of weight $2$ on $\Gamma$, then 
$$
\left(\mathcal{H}(q)\vartheta_{a,p}\left(\tau\right)\right)\Big|U(4)\otimes \chi_{p}^2  + \sum_{\substack{b\pmod{p}\\ b\not\equiv \pm a\pmod{p}}} \mathcal{G}_{p,a+b}(q)\Big|S_{p,a^2-b^2}=f(\tau)
$$
if and only if the first $\frac{p}{6}\left(p^2-1\right)$ Fourier coefficients agree.

Moreover, if $a\equiv 0\pmod{p}$ and $f$ satisfies weight $2$ modularity for $\Gamma_0(p^2)$, then the above identity holds if and only if it holds for the first $\frac{p}{6} \left(p+1\right)$ coefficients.
\end{lemma}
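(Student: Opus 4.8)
The plan is to read Lemma \ref{lem:modcomplete} as an instance of the valence formula (a Sturm-type bound). By Theorem \ref{thm:gencomplete} the left-hand side is a holomorphic weight-$2$ modular form on $\Gamma$ (and, when $a\equiv 0\pmod p$, on $\Gamma_0(p^2)$), and $f$ is assumed to be such a form, so the difference $g$ of the two sides is again a holomorphic weight-$2$ modular form on the same group. The ``only if'' direction is immediate, so the entire content is to show that $g\equiv 0$ as soon as enough of its low-order Fourier coefficients vanish. First I would note that $\sm{1}{1}{0}{1}$ lies in both $\Gamma$ and $\Gamma_0(p^2)$, so the cusp $\infty$ has width $1$; its local uniformizer is therefore $q$, and $\ord_\infty(g)$ is exactly the index of the first nonvanishing coefficient in the $q$-expansion of $g$.

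The key step is the valence formula: for a nonzero holomorphic form of weight $k$ on a finite-index subgroup $\Gamma'\subseteq\SL_2(\Z)$, the total order of vanishing over the compactified fundamental domain (summed over interior points with the usual elliptic weighting and over the cusps in their local uniformizers) equals $\tfrac{k}{12}[\overline{\SL_2(\Z)}:\overline{\Gamma'}]$. Because $g$ is holomorphic everywhere---including at every cusp, which is precisely the genuine (not merely weak) holomorphicity furnished by Theorem \ref{thm:gencomplete}---all of these local orders are nonnegative, so $\ord_\infty(g)$ cannot exceed the right-hand side unless $g\equiv 0$. Hence agreement of sufficiently many initial coefficients forces $g\equiv 0$, giving the ``if'' direction.

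It remains to identify the constant, and this is where the bookkeeping must be done carefully. I would compute the index through the tower $\Gamma\subseteq\Gamma_0(p^2)\subseteq\SL_2(\Z)$: the standard formula gives $[\SL_2(\Z):\Gamma_0(p^2)]=p(p+1)$, and reduction modulo $p$ exhibits $\Gamma$ as the kernel of the surjection $\Gamma_0(p^2)\to(\Z/p\Z)^\ast$, $\gamma\mapsto d\bmod p$, whence $[\Gamma_0(p^2):\Gamma]=p-1$ and $[\SL_2(\Z):\Gamma]=p(p^2-1)$. For $k=2$ these give $\tfrac{k}{12}[\SL_2(\Z):\Gamma]=\tfrac{p}{6}(p^2-1)$ and $\tfrac{k}{12}[\SL_2(\Z):\Gamma_0(p^2)]=\tfrac{p}{6}(p+1)$, exactly the stated counts. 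The one genuinely delicate point is the $\pm I$ normalization: since $-I\in\Gamma_0(p^2)$, there the $\SL_2(\Z)$-index agrees with the index in $\mathrm{PSL}_2(\Z)=\overline{\SL_2(\Z)}$ and $\tfrac{p}{6}(p+1)$ is the sharp valence bound, whereas for odd $p$ one has $-I\notin\Gamma$, so the sharp bound is only $\tfrac{p}{12}(p^2-1)$ and $\tfrac{p}{6}(p^2-1)$ is a correct (though non-optimal) bound. I expect this $\pm I$/cusp-width accounting---verifying that the displayed constant dominates the true valence bound and that only the expansion at $\infty$ enters---to be the main point needing attention; the index computation and the appeal to the valence formula are otherwise routine.
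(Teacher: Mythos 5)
Your proposal is correct and follows exactly the paper's route: the authors prove Theorem \ref{thm:gencomplete} first and then state that Lemma \ref{lem:modcomplete} ``follows from the valence formula,'' which is precisely your argument (difference of two holomorphic weight-$2$ forms, Sturm-type bound via the indices $[\SL_2(\Z):\Gamma]=p\left(p^2-1\right)$ and $[\SL_2(\Z):\Gamma_0\left(p^2\right)]=p(p+1)$). Your additional remark on the $\pm I$ normalization correctly explains why $\frac{p}{6}\left(p^2-1\right)$ is a valid, if non-sharp, bound for $\Gamma$, which the paper leaves implicit.
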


Denoting $\sigma(n):=\sum_{d\mid n} d$, we write the Eisenstein series part of the modular forms from Lemma \ref{lem:modcomplete} (in the special cases $p=3,5,7$) in terms of 
$$
\mathcal{D}(q):=\sum_{n=1}^{\infty} \sigma\left(n\right)q^n.
$$
Since $\mathcal{D}$ is essentially a constant multiple of the weight $2$ Eisenstein series $E_2$, it is well-known that
\begin{equation}\label{Dshadow}
\widehat{\mathcal{D}}(\tau):=\mathcal{D}(q)-\frac{1}{24}+\frac{1}{8\pi y}
\end{equation}
transforms like a modular form of weight $2$ on $\SL_2(\Z)$.  In particular, since every non-trivial character $\chi$ satisfies $\chi(0)=0$, the function $\mathcal{D}\otimes \chi$ is a weight 2 holomorphic modular form.  More precisely, if the modulus of $\chi$ is $m\in \N$, then $\mathcal{D}\otimes \chi$ is a weight 2 holomorphic modular form on $\Gamma_0(m^2)$ (cf. Proposition 2.8 of \cite{OnoBook}).  Furthermore, if $r\not\equiv 0\pmod{p}$, then a straightforward calculation shows that $\mathcal{D}\big|S_{p,r}$ is a holomorphic modular form of weight $2$ on $\Gamma$.  

It is well-known (cf. Section 7.2, Example 2 in \cite{DabMurZag}) that 
$$
\left(\mathcal{H}\vartheta_{0,1}\right)\Big|U(4)=2\mathcal{D}-\mathcal{G}_{1,0}-\frac{1}{12},
$$
while one sees directly that $\vartheta_{0,1}=\sum_{a\pmod{p}} \vartheta_{a,p}$ and $\vartheta_{a,p}=\vartheta_{-a,p}$.  Hence we only need to determine formulas for the modular forms from Lemma \ref{lem:modcomplete} whenever $0\leq a\leq \frac{p-3}{2}$ to obtain them for all $a\in \Z$.  After constructing such modular forms, we use \eqref{eqn:classsums} to conclude Theorem \ref{thm:conj}.  For simplicity, we only work out the exact identities for $a=0$.  

\subsection{$p=3$}
We include the case $p=3$ since the formulas are particularly simple in this case and because it indicates the general method well.  Using the fact that $\mathcal{D}\otimes \chi_3$ is a holomorphic modular form of weight $2$ for $\Gamma_0(9)$, Lemma \ref{lem:modcomplete} implies that
\begin{equation}\label{eqn:3}
\left(\mathcal{H}\vartheta_{0,3}\right)\Big|U(4)\otimes\chi_3^2=-2\mathcal{G}_{3, 1}\Big|S_{3, 2}+\mathcal{D}\otimes \chi_3^2-\frac14\mathcal{D}\otimes \chi_3\left(1+\chi_3\right).
\end{equation}
The $n$th coefficient of $-2\mathcal{G}_{3, 1}\Big|S_{3, 2}$ is $0$ unless $n\equiv 2\pmod{3}$, in which case it equals
\[
-2\sum\limits_{\substack{ d\equiv \pm 1\pmod{3}\\ d\mid n, d<\frac{n}{d}}}d.
\]
The $n$th coefficient of $\mathcal{D}\otimes \chi_3^2$ is $0$ if $3|n$ and otherwise $\sigma(n)$.  Finally, the $n$th coefficient of $-\frac14\mathcal{D}\otimes \chi_3(1+\chi_3)$ is $0$ unless $n\equiv 1\pmod{3}$, in which case it equals $-\frac12 \sigma(n)$.  Thus the overall $n$th coefficient on the right-hand side of \eqref{eqn:3} is
\[
\begin{cases}
\frac12 \sigma(n)&\quad\text{ if } n\equiv 1\pmod{3},\\
\sigma(n)-2\sum\limits_{d\equiv \pm 1\pmod{3}\atop{d\mid n, d<\frac{n}{d}}}d &\quad\text{ if } n\equiv 2\pmod{3}.
\end{cases}
\]
Comparing with \eqref{eqn:classsums}, we get in particular for a prime $\ell>3$
\[
H_{0,3}(\ell)= 
\begin{cases}
\frac{\ell+1}{2}&\quad\text{ if } \ell\equiv 1\pmod{3},\\
\ell-1 &\quad\text{ if } \ell\equiv 2\pmod{3}.
\end{cases}
\]

\subsection{$p=5$}

The precise version of Conjecture \ref{conj:5case} in \cite{Hurwitz} is given by the following.
\begin{conjecture}\label{conj:5}
For a prime $\ell$ and $a\in \Z$ one has that 
\begin{equation}\label{eqn:5conjecture}
H_{a,5}(\ell)=
\begin{cases}
\frac{\ell+1}{2}&\text{if }a\equiv 0\pmod{5},\text{ and } \ell \equiv 1\pmod{5},\\
\frac{\ell+1}{3}&\text{if }a\equiv 0\pmod{5},\text{ and } \ell \equiv 2,3\pmod{5},\\
\frac{\ell+1}{3}&\text{if }a\equiv \pm 1\pmod{5},\text{ and } \ell \equiv 1,2\pmod{5},\\
\frac{5\ell+5}{12}&\text{if }a\equiv \pm 1\pmod{5},\text{ and } \ell \equiv 4\pmod{5},\\
\frac{5\ell-7}{12}&\text{if }a\equiv \pm 2\pmod{5},\text{ and } \ell \equiv 1\pmod{5},\\
\frac{\ell+1}{3}&\text{if }a\equiv \pm 2\pmod{5},\text{ and } \ell \equiv 3,4\pmod{5}.
\end{cases}
\end{equation}
\end{conjecture}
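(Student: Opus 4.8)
The plan is to run, for $p=5$, exactly the strategy illustrated for $p=3$ around \eqref{eqn:3} and then feed the output into \eqref{eqn:classsums}. By Theorem \ref{thm:gencomplete} the combination
\[
\left(\mathcal{H}(q)\vartheta_{a,5}(\tau)\right)\Big|U(4)\otimes\chi_5^2 + \sum_{\substack{b\pmod 5\\ b\not\equiv\pm a\pmod 5}}\mathcal{G}_{5,a+b}(q)\Big|S_{5,a^2-b^2}
\]
is a holomorphic weight $2$ form on $\Gamma$ (on $\Gamma_0(25)$ when $a\equiv 0\pmod 5$), and Lemma \ref{lem:modcomplete} reduces the proof of any proposed closed form to matching the first $\tfrac56(5^2-1)=20$ Fourier coefficients (only $\tfrac56(5+1)=5$ when $a\equiv0\pmod 5$). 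Since $\vartheta_{a,5}=\vartheta_{-a,5}$, it suffices to treat $0\le a\le \tfrac{5-3}{2}=1$; the remaining class $a\equiv\pm2\pmod 5$ is then recovered from $\vartheta_{0,1}=\vartheta_{0,5}+2\vartheta_{1,5}+2\vartheta_{2,5}$ together with $(\mathcal{H}\vartheta_{0,1})|U(4)=2\mathcal{D}-\mathcal{G}_{1,0}-\tfrac1{12}$ twisted by $\chi_5^2$, so that $(\mathcal{H}\vartheta_{2,5})|U(4)\otimes\chi_5^2$ becomes a fixed linear combination of the $a=0,1$ outputs and this Eichler term.

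For each of $a=0$ and $a=1$ I would first write down the candidate right-hand side as an explicit Eisenstein series in the shape of \eqref{eqn:3}, namely a $\Q$-linear combination of the character twists $\mathcal{D}\otimes\chi$ for $\chi$ modulo $5$ (paired so as to stay rational) and of the sieved series $\mathcal{D}|S_{5,r}$, each of which was noted above to be a holomorphic weight $2$ form on $\Gamma$. Matching the $5$, resp.\ $20$, coefficients guaranteed by Lemma \ref{lem:modcomplete} then both pins down the coefficients of this combination and certifies the identity. Because $X_0(25)$ has genus $0$, this step simultaneously shows a posteriori that no cusp form enters, which is exactly what must happen if the answer is to be linear in $\ell$.

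It then remains to read off the prime coefficients. Specializing $n=\ell$ to a prime $\ell>5$ collapses all the divisor data: in $\mathcal{G}_{5,r}(q)|S_{5,s}$ the only factorization $dd'=\ell$ with $d<d'$ is $1\cdot\ell$, so its $q^\ell$-coefficient is $1$ precisely when $\pm r\equiv 1\pmod 5$ and $\ell\equiv s\pmod 5$, and $0$ otherwise (the square-indexed term never meets a prime), while the Eisenstein contributions are $\chi(\ell)(\ell+1)$ and $(\ell+1)$ times the indicator of the relevant residue of $\ell$. Every term is therefore linear in $\ell$, and combining them through \eqref{eqn:classsums} yields $H_{a,5}(\ell)=c_1\ell+c_2$ with the constants displayed in \eqref{eqn:5conjecture} once the cases are sorted by $\ell\pmod 5$.

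I expect the main obstacle to be bookkeeping rather than theory. The two delicate points are: (i) choosing the correct Eisenstein representative in the Nebentypus space for $a=1$, where the order-four characters modulo $5$ genuinely contribute and one must ensure the $20$-coefficient comparison is carried out against an honest spanning set; and (ii) disentangling the three overlapping congruence conditions --- the solvability of $a^2-b^2\equiv s\pmod 5$, the sieve $S_{5,s}$, and the requirement $\pm r\equiv1\pmod 5$ --- into the six disjoint cases of \eqref{eqn:5conjecture}. Neither is conceptually hard, but both demand careful, case-by-case organization.
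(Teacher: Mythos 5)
Your proposal follows essentially the same route as the paper: invoke Theorem \ref{thm:gencomplete} to get a holomorphic weight $2$ form, match Fourier coefficients via Lemma \ref{lem:modcomplete} against an explicit Eisenstein combination of twists $\mathcal{D}\otimes\chi$ and sieves $\mathcal{D}|S_{5,r}$ (the paper's Corollary \ref{cor:5case} checks $20$ coefficients for both $a=0$ and $a=1$), reduce $a\equiv\pm2\pmod 5$ via $\vartheta_{0,1}=\sum_{a\pmod 5}\vartheta_{a,5}$ and the identity $(\mathcal{H}\vartheta_{0,1})|U(4)=2\mathcal{D}-\mathcal{G}_{1,0}-\frac{1}{12}$, and then collapse the divisor sums at a prime $\ell$ exactly as in the paper's $p=3$ model case. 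The argument is correct, and your side remarks (the $5$-coefficient bound for $a=0$, the absence of cusp forms) are consistent with, though not needed beyond, what the paper does.
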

We prove Conjecture \ref{conj:5} by showing the following more precise version.
\begin{corollary}\label{cor:5case}
One has that 
\begin{align*}
\left(\mathcal{H}\vartheta_{0,5}\right)\Big|U(4)\otimes\chi_5^2=&\frac{1}{2}\mathcal{D}\otimes\chi_5^2-\frac{1}{12}\mathcal{D}\otimes \chi_5\left(\chi_5-1\right) - 2 \mathcal{G}_{5,1}\Big|S_{5,4} - 2 \mathcal{G}_{5,2}\Big|S_{5,1},\\
\left(\mathcal{H}\vartheta_{1,5}\right)\Big|U(4)\otimes\chi_5^2=&\frac{1}{3}\mathcal{D}\otimes\chi_5^2+\left(\frac{1}{6}\mathcal{D}-\mathcal{G}_{5,1}-\mathcal{G}_{5,2}\right)\Big|S_{5,3}\\
&\qquad\qquad\qquad\qquad\quad+\left( \frac{1}{12}\mathcal{D}-\frac{1}{2}\mathcal{G}_{5,2}-\frac{1}{2}\mathcal{G}_{5,3}\right)\Big|S_{5,4}.
\end{align*}

In particular, the conjectured formula \eqref{eqn:5conjecture} is true.
\end{corollary}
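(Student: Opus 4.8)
The plan is to read each of the two displayed identities as an equality of genuine weight-$2$ holomorphic modular forms and to settle it by the finite Fourier-coefficient comparison furnished by Lemma~\ref{lem:modcomplete}. Recall from Lemmas~\ref{lem:Htwist} and~\ref{lem:Gcompletegen}, which together prove Theorem~\ref{thm:gencomplete}, that for a suitable parameter $a$ the non-holomorphic completions of $\left(\mathcal{H}\vartheta_{2a,5}\right)\big|U(4)\otimes\chi_5^2$ and of $\sum_{b\not\equiv\pm a}\mathcal{G}_{5,a+b}\big|S_{5,a^2-b^2}$ cancel, so that their sum is holomorphic of weight $2$. The first step is therefore to move all the $\mathcal{G}$-terms on each displayed right-hand side over to the left so as to form exactly this sum: one takes $a\equiv 0\pmod 5$ to produce $\vartheta_{0,5}$ and $a\equiv 3\pmod 5$ to produce $\vartheta_{2a,5}=\vartheta_{1,5}$, and collapses the $\mathcal{G}$-sum by the $b\leftrightarrow -b$ pairing of Lemma~\ref{lem:Gcompletegen} together with the fact that the square-indexed part of each $\mathcal{G}$ is annihilated by the relevant sieve. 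What remains on the right is then the Eisenstein combination built from $\mathcal{D}\otimes\chi_5^2$, $\mathcal{D}\otimes\chi_5(\chi_5-1)$ and $\mathcal{D}\big|S_{5,r}$, which is a classical holomorphic weight-$2$ form by the facts recalled before \eqref{eqn:3}.

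For the first identity the relevant theta-index is $\equiv 0\pmod 5$, so the group is $\Gamma_0(25)$ and, by the second assertion of Lemma~\ref{lem:modcomplete}, the two sides coincide as soon as their first $\tfrac{5}{6}(5+1)=5$ Fourier coefficients agree. For the second identity the group is $\Gamma=\Gamma_0(25)\cap\Gamma_1(5)$, and the first assertion of Lemma~\ref{lem:modcomplete} requires checking the first $\tfrac{5}{6}(25-1)=20$ coefficients. I would compute these finitely many coefficients directly: those of the mixed object are the class-number sums $H_{a,5}(n)$ read off from \eqref{eqn:classsums} together with the explicit divisor coefficients of the $\mathcal{G}$-terms, while those of the Eisenstein combination follow at once from $\sigma(n)$, the values of $\chi_5$, and the sieves.

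Once the two identities are in hand, I would deduce the closed formula \eqref{eqn:5conjecture} for primes. By \eqref{eqn:classsums}, for a prime $\ell\neq 5$ the quantity $H_{a,5}(\ell)$ is the $\ell$-th Fourier coefficient of the corresponding form, so it suffices to evaluate each right-hand side at $n=\ell$. Here everything simplifies: $\mathcal{D}\otimes\chi\,[\ell]=\chi(\ell)\sigma(\ell)=\chi(\ell)(\ell+1)$ and $\mathcal{D}\big|S_{5,s}\,[\ell]=(\ell+1)$ precisely when $\ell\equiv s\pmod 5$, while for each $\mathcal{G}_{5,r}\big|S_{5,s}$ the only factorization of the prime $\ell$ is $1\cdot\ell$, so its $\ell$-th coefficient is $1$ when $\ell\equiv s\pmod 5$ and $1\equiv\pm r\pmod 5$ and is $0$ otherwise (the square-indexed part never hits a prime). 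Collecting these values over $\ell\equiv 1,2,3,4\pmod 5$ yields the cases of \eqref{eqn:5conjecture} for $a\equiv 0$ and $a\equiv\pm 1\pmod 5$. For $a\equiv\pm 2\pmod 5$ I would use $\vartheta_{0,1}=\vartheta_{0,5}+2\vartheta_{1,5}+2\vartheta_{2,5}$ together with the classical identity $\left(\mathcal{H}\vartheta_{0,1}\right)\big|U(4)=2\mathcal{D}-\mathcal{G}_{1,0}-\tfrac{1}{12}$ to express $\left(\mathcal{H}\vartheta_{2,5}\right)\big|U(4)\otimes\chi_5^2$ as a combination of the two forms already determined, and then read off $H_{2,5}(\ell)$ in the same manner.

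The main obstacle is the bookkeeping in the first step: one must check that transferring the $\mathcal{G}$-terms reproduces \emph{precisely} the completed holomorphic form coming from Lemmas~\ref{lem:Htwist} and~\ref{lem:Gcompletegen}, with no leftover non-holomorphic contribution. This requires tracking the doubling $a\mapsto 2a$ of the theta-index, the identification of the sieved congruence classes $a^2-b^2\pmod 5$, and the $b\leftrightarrow -b$ pairing together with the vanishing of the square-indexed terms under the sieve; a sign or index slip here would produce a spurious non-holomorphic remainder and invalidate the application of Lemma~\ref{lem:modcomplete}. Once both sides are known to be holomorphic modular forms on the same group, the remainder is the finite (though, for the $a=1$ case, somewhat lengthy) computation of Fourier coefficients — in particular a short table of Hurwitz class numbers — and the routine specialization to primes, where I expect no conceptual difficulty.
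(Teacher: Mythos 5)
Your proposal is correct and takes essentially the same route as the paper: the paper likewise reads both identities as equalities of weight-$2$ holomorphic forms guaranteed by Theorem \ref{thm:gencomplete} (i.e., by the cancellation of completions in Lemmas \ref{lem:Htwist} and \ref{lem:Gcompletegen}), verifies them through the finite coefficient comparison of Lemma \ref{lem:modcomplete} (``after checking $20$ coefficients''), and then extracts \eqref{eqn:5conjecture} by the same explicit coefficient analysis used for $p=3$, with the case $a\equiv\pm 2\pmod 5$ handled exactly as you suggest via $\vartheta_{0,1}=\sum_{a\pmod 5}\vartheta_{a,5}$ and $\left(\mathcal{H}\vartheta_{0,1}\right)\big|U(4)=2\mathcal{D}-\mathcal{G}_{1,0}-\tfrac{1}{12}$, which is precisely the reduction the paper sets up before its case-by-case subsections. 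Your one deviation --- invoking the sharper bound of $\tfrac{p}{6}(p+1)=5$ coefficients for the $a=0$ identity on $\Gamma_0(25)$ rather than the blanket $20$ --- is a harmless refinement, not a different argument.
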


\begin{proof}
The holomorphic modular forms occurring by twisting $\mathcal{D}$ with a character of modulus 5 have level $\Gamma_0(25)$, while $\mathcal{D}|S_{5,a}$ ($a\not\equiv 0\pmod{5}$) is a holomorphic modular form for $\Gamma$.  After checking $20$ coefficients, Lemma \ref{lem:modcomplete} yields the equalities claimed in the corollary.  To obtain \eqref{eqn:5conjecture}, we simply apply an analysis similar to that used in the case for $p=3$ above to obtain the explicit coefficients.
\end{proof}

\subsection{$p=7$}
Conjecture \ref{conj:7case} follows from the following more precise version.
\begin{conjecture}\label{conj:7}
For a prime $\ell$ and $a\in\Z$ one has that
\begin{equation}\label{eqn:7conjecture}
H_{a,7}(\ell)= 
\begin{cases}
\frac{\ell+1}{3}&\text{if }a\equiv \pm 1\pmod{7},\text{ and } \ell \equiv 1\pmod{7},\\
\frac{\ell+1}{4}&\text{if }a\equiv \pm 1\pmod{7},\text{ and } \ell \equiv 3,6\pmod{7},\\
\frac{\ell+1}{4}&\text{if }a\equiv \pm 2\pmod{7},\text{ and } \ell \equiv 3,5\pmod{7},\\
\frac{\ell+1}{4}&\text{if }a\equiv \pm 3\pmod{7},\text{ and } \ell \equiv 5,6\pmod{7},\\
\end{cases}
\end{equation}
\end{conjecture}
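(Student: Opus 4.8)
The plan is to follow exactly the template that settled the cases $p=3$ and $p=5$, the only genuinely new feature being the appearance of a cusp form. First I would apply Theorem~\ref{thm:gencomplete} with $p=7$ to conclude that for each $a$ the function
\[
F_a:=\left(\mathcal{H}\vartheta_{a,7}\right)\big|U(4)\otimes\chi_7^2+\sum_{\substack{b\pmod 7\\ b\not\equiv\pm a\pmod 7}}\mathcal{G}_{7,a+b}\big|S_{7,a^2-b^2}
\]
is a holomorphic weight $2$ modular form on $\Gamma$, and on $\Gamma_0(49)$ when $a\equiv 0\pmod 7$. Since $\vartheta_{a,7}=\vartheta_{-a,7}$ and $\vartheta_{0,1}=\sum_{a\pmod 7}\vartheta_{a,7}$, together with the known identity $\left(\mathcal{H}\vartheta_{0,1}\right)|U(4)=2\mathcal{D}-\mathcal{G}_{1,0}-\tfrac{1}{12}$, it suffices to determine $F_a$ explicitly for $a=0,1,2$; the form attached to $a\equiv\pm 3$ is then forced by linearity.

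Next I would identify each $F_a$ inside the relevant space of weight $2$ forms on $\Gamma$. As in Corollary~\ref{cor:5case}, the Eisenstein part is spanned by the twists $\mathcal{D}\otimes\chi$ with $\chi$ ranging over characters modulo $7$ and by the sieved forms $\mathcal{D}|S_{7,r}$ with $r\not\equiv 0\pmod 7$. The new ingredient for $p=7$ is that $X_0(49)$ has genus $1$, so the cuspidal part is nonzero: it is spanned by the normalized CM newform $g$ of level $49$ attached to $\Q(\sqrt{-7})$, together with its character twists and its sieves $g|S_{7,r}$. I would then pin down the exact rational combination expressing each $F_a$ by comparing Fourier coefficients: by Lemma~\ref{lem:modcomplete} it suffices to match the first $\tfrac{7}{6}\left(7^2-1\right)=56$ coefficients in general, and only the first $\tfrac{7}{6}(7+1)$, i.e.\ at most ten, coefficients in the case $a\equiv 0$ where $F_0$ lives on $\Gamma_0(49)$.

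Finally I would extract $H_{a,7}(\ell)$ for prime $\ell$ via \eqref{eqn:classsums} as the $\ell$th coefficient of $F_a$ corrected by the divisor sums. At a prime $\ell$ the square term of each $\mathcal{G}_{7,r}$ never occurs and the only factorization is $1\cdot\ell$, so $\mathcal{G}_{7,r}$ contributes the indicator of $r\equiv\pm 1\pmod 7$; the Eisenstein part contributes an explicit clean function of $\ell$ and of $\ell\pmod 7$ through $\sigma(\ell)=\ell+1$ and the character values. The cuspidal part contributes a fixed rational multiple of $a_\ell(g)$. Here the CM property of $g$ is decisive: since $a_\ell(g)=0$ whenever $\ell$ is inert in $\Q(\sqrt{-7})$, i.e.\ whenever $\leg{\ell}{7}=-1$, equivalently $\ell\equiv 3,5,6\pmod 7$, every cuspidal contribution vanishes for all listed cases except $(a,L)\equiv(\pm 1,1)\pmod 7$; and in that remaining split case the coefficient of the cuspidal piece in the relevant sieved component turns out to be zero, again leaving a clean formula. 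Reading off the resulting coefficients yields \eqref{eqn:7conjecture}.

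I expect the main obstacle to be exactly this cuspidal contribution, which is absent for $p=3,5$. One must verify that the cuspidal part of $M_2(\Gamma)$ is exhausted by the character twists and sieves of the single CM form $g$, so that every cusp form that can appear has $\ell$th coefficient vanishing at the inert primes $\ell\equiv 3,5,6\pmod 7$, and one must separately check that the cuspidal coefficient genuinely drops out in the split case $(\pm 1,1)$. Everything else reduces to the same bounded coefficient comparison, via Lemma~\ref{lem:modcomplete}, that settled the cases $p=3$ and $p=5$.
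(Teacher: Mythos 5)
Your proposal matches the paper's own proof in all essentials: you likewise combine Theorem \ref{thm:gencomplete} with the valence-formula bound of Lemma \ref{lem:modcomplete} (56 coefficients in general, with the reduction to $a=0,1,2$ via $\vartheta_{0,1}=\sum_{a\pmod 7}\vartheta_{a,7}$ and the known identity for $\left(\mathcal{H}\vartheta_{0,1}\right)|U(4)$), express the Eisenstein part through twists and sieves of $\mathcal{D}$, and dispose of the cuspidal contribution using the level-$49$ CM newform $g_7$. The only cosmetic difference is that the paper (Corollary \ref{cor:7case}) states the sieved identities directly in the congruence classes $r$ where all cusp-form coefficients vanish identically---including the split class $r\equiv 1\pmod 7$ for $a\equiv\pm 1$, where the cuspidal component drops out just as you predict---rather than phrasing the vanishing through $a_\ell(g_7)=0$ at inert primes $\ell\equiv 3,5,6\pmod 7$, which is the same underlying fact; note also that once a candidate identity is verified coefficient-by-coefficient via Lemma \ref{lem:modcomplete}, your worry about exhausting the cuspidal part of the space is not logically needed.
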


The argument is analogous to the above cases for $p=3$ and $p=5$, except that there are cusp forms in the spaces of interest.  For example, for $a=0$, we require the weight $2$ newform (of level $49$) $g_7$ associated to the elliptic curve $y^2+xy=x^3-x^2-2x-1$ (which is denoted $49A1$ and has CM by $\Q\left(\sqrt{-7}\right)$).  The coefficients of $g_7$ are multiplicative and Parry has explicitly written $g_7$ in terms of the two variable Ramanujan theta function.

After comparing the first $56$ coefficients in the identity, Lemma \ref{lem:modcomplete} immediately yields the following corollary.
\begin{corollary}\label{cor:7casea=0}
One has that 
\begin{multline*}
\left(\mathcal{H}\vartheta_{0,7}\right)\Big|U(4)\otimes\chi_7^2=\frac{1}{4}\mathcal{D}\otimes\chi_7^2+\frac{1}{24}\mathcal{D}\otimes \chi_7\left(\chi_7-1\right) \\
 -2 \mathcal{G}_{7,2}\Big|S_{7,3} -2 \mathcal{G}_{7,4}\Big|S_{7,-2} -2 \mathcal{G}_{7,1}\Big|S_{7,-1} + \frac{1}{4}g_7.
\end{multline*}
\end{corollary}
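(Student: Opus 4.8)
The plan is to identify both sides of the claimed identity as holomorphic weight $2$ modular forms on $\Gamma_0(49)$ and then to reduce the equality to a finite coefficient comparison via Lemma \ref{lem:modcomplete}. First I would put the assertion into the shape produced by Theorem \ref{thm:gencomplete}. Moving the three divisor--sum terms to the left and using $\mathcal{G}_{7,b}=\mathcal{G}_{7,-b}$ together with $(-b)^2\equiv b^2\pmod 7$, the pairing $b\leftrightarrow -b$ over the nonzero residues gives
\[
\sum_{\substack{b\pmod 7\\ b\not\equiv 0}}\mathcal{G}_{7,b}\big|S_{7,-b^2}=2\mathcal{G}_{7,1}\big|S_{7,-1}+2\mathcal{G}_{7,2}\big|S_{7,3}+2\mathcal{G}_{7,4}\big|S_{7,-2},
\]
which is exactly the combination added to $\left(\mathcal{H}\vartheta_{0,7}\right)|U(4)\otimes\chi_7^2$. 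Hence the $a=0$ case of Theorem \ref{thm:gencomplete} guarantees that the left-hand side of the rearranged identity is a holomorphic weight $2$ form on $\Gamma_0(49)$.

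Next I would verify that the candidate $f:=\frac14\mathcal{D}\otimes\chi_7^2+\frac1{24}\mathcal{D}\otimes\chi_7(\chi_7-1)+\frac14 g_7$ lies in the same space $M_2\left(\Gamma_0(49)\right)$. Each twist $\mathcal{D}\otimes\chi$ with $\chi$ of modulus $7$ is a holomorphic weight $2$ form on $\Gamma_0(49)$ by Proposition 2.8 of \cite{OnoBook}, and $g_7$ is by construction the weight $2$ newform of level $49$; thus $f\in M_2\left(\Gamma_0(49)\right)$. With both sides in $M_2\left(\Gamma_0(49)\right)$ and $a\equiv 0\pmod 7$, Lemma \ref{lem:modcomplete} reduces the identity to the agreement of the first $\frac{7}{6}\left(7^2-1\right)=56$ Fourier coefficients. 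I would then compute these coefficients directly: on the left from the values $H\left(4n-m^2\right)$ with $m\equiv 0\pmod 7$ (the identity $\left(\mathcal{H}\vartheta_{0,1}\right)|U(4)=2\mathcal{D}-\mathcal{G}_{1,0}-\frac1{12}$ serving as a convenient consistency check) together with the three explicit divisor sums, and on the right from $\sigma(n)\chi_7^2(n)$, $\sigma(n)\chi_7(n)$, and the multiplicative CM-coefficients of $g_7$. Checking the $56$ equalities then completes the proof.

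The genuinely new obstacle relative to the cases $p=3$ and $p=5$ is the presence of a cusp form. For those primes the ambient spaces $M_2\left(\Gamma_0(9)\right)$ and $M_2\left(\Gamma_0(25)\right)$ contain no cusp forms, since $X_0(9)$ and $X_0(25)$ have genus $0$, so the identity was forced by the Eisenstein data alone. Here $X_0(49)$ has genus $1$ and $S_2\left(\Gamma_0(49)\right)$ is one-dimensional, spanned by $g_7$; the equality can hold only after fitting the correct multiple $\frac14 g_7$, and it is precisely this scalar --- together with the absence of any spurious cusp contribution --- that the $56$-coefficient comparison certifies. The remaining work is the routine bookkeeping of the operators $U(4)$, $S_{7,\bullet}$, and $\otimes\chi_7^2$ when extracting the Fourier coefficients.
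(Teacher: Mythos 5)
Your strategy is the same as the paper's: cast the identity in the shape of Theorem \ref{thm:gencomplete} with $a=0$, note that both sides lie in $M_2\left(\Gamma_0(49)\right)$ (Eisenstein twists via Proposition 2.8 of \cite{OnoBook}, and $g_7$ spanning the one-dimensional $S_2\left(\Gamma_0(49)\right)$), and reduce to a finite coefficient check through Lemma \ref{lem:modcomplete}; the paper likewise compares $56$ coefficients, though strictly the $a\equiv 0\pmod{7}$ clause of the lemma would already let you stop after $\frac{7}{6}(7+1)<10$ coefficients, so quoting the bound $\frac{7}{6}\left(7^2-1\right)=56$ from the general clause is harmless but not the sharpest available.

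There is, however, one step whose justification is wrong as written: the symmetry $\mathcal{G}_{7,b}=\mathcal{G}_{7,-b}$ is false. The divisor-sum part of $\mathcal{G}_{N,r}$ depends only on $\pm r\pmod{N}$, but the unary theta correction $\sum_{n\geq 1}\left(Nn-r\right)q^{\left(Nn-r\right)^2}$ does not: for $0<b<7$ it is supported on exponents $m^2$ with $m\equiv -b\pmod{7}$ in one case and $m\equiv b\pmod{7}$ in the other, with coefficient $m$, and these series genuinely differ. Your displayed pairing identity
\[
\sum_{\substack{b\pmod 7\\ b\not\equiv 0}}\mathcal{G}_{7,b}\Big|S_{7,-b^2}=2\mathcal{G}_{7,1}\Big|S_{7,-1}+2\mathcal{G}_{7,2}\Big|S_{7,3}+2\mathcal{G}_{7,4}\Big|S_{7,-2}
\]
is nonetheless true, but for a different reason: after applying $S_{7,-b^2}$ the theta parts are annihilated, since their exponents satisfy $m^2\equiv b^2\pmod{7}$ while the sieve keeps only exponents $\equiv -b^2\pmod{7}$, and $b^2\equiv -b^2\pmod{7}$ would force $7\mid 2b^2$, impossible for $b\not\equiv 0\pmod{7}$ (equivalently, $-1$ is a quadratic non-residue modulo $7$, as $7\equiv 3\pmod 4$, so $-b^2$ is never a nonzero square). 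With this observation --- which is exactly what makes, e.g., $2\mathcal{G}_{7,4}\big|S_{7,-2}=\left(\mathcal{G}_{7,3}+\mathcal{G}_{7,4}\right)\big|S_{7,-2}$ --- your reduction to the $a=0$ case of Theorem \ref{thm:gencomplete} is valid, and the remainder of the argument (membership in $M_2\left(\Gamma_0(49)\right)$, fitting the multiple $\frac14 g_7$, and the explicit coefficient comparison) agrees with the paper's proof.
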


We do not work out the details for the cusp forms for $a\not\equiv 0 \pmod{7}$, but rather list the resulting identity when restricting to coefficients in certain congruence classes where the coefficients of the cusp forms are all zero.  This suffices to prove Conjecture \ref{conj:7}.
\begin{corollary}\label{cor:7case}
The following identities hold.

\noindent
\begin{enumerate}
\item
For $r\equiv 1,3,6\pmod{7}$, one has
$$
\left(\mathcal{H}\vartheta_{1,7}\right)\Big|U(4)\Big|S_{7,r} = \frac{1}{4}\mathcal{D} + 
\begin{cases}
\frac{1}{12}\mathcal{D}-\mathcal{G}_{7,2}-\mathcal{G}_{7,3}&\text{if }r\equiv 1\pmod 7,\\
0 & \text{if }r\equiv 3,6\pmod{7}.
\end{cases}
$$
\item For $r\equiv 3,5\pmod{7}$, one has 
$$
\left(\mathcal{H}\vartheta_{2,7}\right)\Big|U(4)\Big|S_{7,r} = \frac{1}{4}\mathcal{D}.
$$
\item
For $r\equiv 5,6\pmod{7}$, one has 
$$
\left(\mathcal{H}\vartheta_{3,7}\right)\Big|U(4)\Big|S_{7,r} = \frac{1}{4}\mathcal{D}.
$$
\end{enumerate}
In particular, Conjecture \ref{conj:7} is true.
\end{corollary}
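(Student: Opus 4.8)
The plan is to run the same valence-formula argument used for $p=3,5$, the only new feature being a nonzero cuspidal subspace that must be sidestepped. First I would apply Theorem~\ref{thm:gencomplete} to each $a\in\{1,2,3\}$: the completed combination $(\mathcal H\vartheta_{a,7})|U(4)\otimes\chi_7^2+\sum_{b}\mathcal G_{7,a+b}|S_{7,a^2-b^2}$ is a holomorphic weight-$2$ modular form on $\Gamma=\Gamma_0(49)\cap\Gamma_1(7)$, built from the explicit Eisenstein pieces $\mathcal D\otimes\chi$ and $\mathcal D|S_{7,r}$ together with the divisor generating functions $\mathcal G_{7,r}$. The identities of the corollary are then obtained by sieving this form to a single residue class $n\equiv r\pmod 7$ via $S_{7,r}$.

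The structural input that makes the listed classes special is the shape of the cusp space. Every weight-$2$ cusp form on $\Gamma$ arising here lies in the span of the level-$49$ newform $g_7$ and its companions, all of which have complex multiplication by $\Q(\sqrt{-7})$. A prime $\ell\ne 7$ enters the support of such a form only when it splits in $\Q(\sqrt{-7})$, i.e. $\leg{\ell}{7}=1$; since any $n$ coprime to $7$ in the support is a product of split primes and even powers of inert primes, its residue is automatically a quadratic residue. Hence these cusp forms are supported, away from $7$, on the classes $\{1,2,4\}$ and vanish identically on the non-residue classes $\{3,5,6\}$. Consequently $S_{7,r}$ with $r\in\{3,5,6\}$ kills the entire cuspidal contribution, so the sieved form is purely Eisenstein-plus-divisor. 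This disposes at once of both cases of parts (2) and (3) and of the $r\equiv 3,6$ cases of part (1).

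The one class not covered by this vanishing is $r\equiv 1$ in part (1), a quadratic residue on which the CM forms need not vanish. Here I would not argue structurally but instead verify directly that the cuspidal part of this particular sieved form is absent. By Lemma~\ref{lem:modcomplete}, two weight-$2$ holomorphic forms on $\Gamma$ coincide once their first $\tfrac{7}{6}(7^2-1)=56$ Fourier coefficients agree, so it suffices to check that the first $56$ coefficients of $(\mathcal H\vartheta_{1,7})|U(4)|S_{7,1}$ match those of $\big(\tfrac14\mathcal D+\tfrac1{12}\mathcal D-\mathcal G_{7,2}-\mathcal G_{7,3}\big)|S_{7,1}$. This finite computation simultaneously certifies the identity and shows that no multiple of $g_7$ survives in this class. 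The same $56$-coefficient check, restricted to each listed class, pins down the Eisenstein combination in all remaining cases.

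Finally, with each sieved identity in hand as an equality of generating functions, I would read off the prime coefficients exactly as for $p=3,5$: at $n=\ell$ prime one has $\sigma(\ell)=\ell+1$, the square-indexed tails of the $\mathcal G_{7,r}$ contribute nothing, and the divisor condition $d\equiv\pm r\pmod 7$ selects only $d=1$, and that only when $r\equiv\pm1$. Substituting into \eqref{eqn:classsums} then yields $H_{a,7}(\ell)$ in each residue class and reproduces \eqref{eqn:7conjecture}, establishing Conjecture~\ref{conj:7}. I expect the principal difficulty to be the control of the cuspidal space: confirming that every relevant cusp form is CM by $\Q(\sqrt{-7})$ with the correct nebentypus, and handling the residue $r\equiv 1$, where the disappearance of the cusp part is a computational rather than a structural fact.
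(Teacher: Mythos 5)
Your proposal is correct and takes essentially the same route as the paper: apply Theorem \ref{thm:gencomplete}, invoke the valence-formula bound of Lemma \ref{lem:modcomplete} to reduce each identity to checking the first $56$ coefficients, and use the vanishing of the cuspidal contribution on the listed residue classes, exactly as the paper does when it restricts to ``congruence classes where the coefficients of the cusp forms are all zero.'' If anything, you supply detail the paper leaves implicit---the structural reason (the cusp space on $\Gamma$ is spanned by forms with CM by $\Q\left(\sqrt{-7}\right)$, hence supported away from $7$ on the quadratic residues $\{1,2,4\}\pmod 7$) and the correct observation that the class $r\equiv 1$ in part (1) is not covered by this vanishing and must instead be certified by the finite coefficient check.
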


\end{document}